\documentclass[12pt, reqno]{amsart}
\usepackage[margin=1in]{geometry}
\usepackage{amssymb,latexsym,amsmath,amscd,amsfonts}
\usepackage{latexsym}
\usepackage[mathscr]{eucal}
\usepackage{bm}
\usepackage{mathptmx}
\usepackage{amssymb}
\usepackage{amsthm}
\usepackage{dcolumn}
\usepackage[all]{xy}
\usepackage{enumitem}
\usepackage[utf8]{inputenc}

\def \qed {\hfill \vrule height6pt width 6pt depth 0pt}
\def\textmatrix#1&#2\\#3&#4\\{\bigl({#1 \atop #3}\ {#2 \atop #4}\bigr)}
\def\dispmatrix#1&#2\\#3&#4\\{\left({#1 \atop #3}\ {#2 \atop #4}\right)}
\newcommand{\beg}{\begin{equation}}
	\newcommand{\eeg}{\end{equation}}
\newcommand{\ben}{\begin{eqnarray*}}
	\newcommand{\een}{\end{eqnarray*}}

\newtheorem{thm}{Theorem}[section]
\newtheorem{cor}[thm]{Corollary}
\newtheorem{lem}[thm]{Lemma}

\numberwithin{equation}{section} \theoremstyle{definition}
\newtheorem{defn}[thm]{Definition}

\newtheorem{eg}[thm]{Example}

\newcommand{\HS}{\mathcal H}
\newcommand{\KS}{\mathcal K}
\newcommand{\C}{\mathbb{C}}

\newcommand{\T}{\mathbb{T}}

\newcommand{\Z}{\mathbb{Z}}
\newcommand{\N}{\mathbb{N}}

\newcommand{\ov}{\overline}
\newcommand{\la}{\left\langle}
\newcommand{\ra}{\right\rangle}

\begin{document}
	\title[Intertwining of $*$-regular $q$-isometric dilations]{Intertwining of $*$-regular $q$-isometric dilations}

	\author[Tomar] {NITIN TOMAR}
	
	\address[Nitin Tomar]{Theoretical Statistics and Mathematics Unit, Indian Statistical Institute, Kolkata, West Bengal-7000108, India.} \email{tomarnitin414@gmail.com}		
	
	\keywords{$q$-commuting contractions, $*$-regular $q$-isometric dilations, commutant lifting theorem}	
	
	\subjclass[2020]{47A20, 47B02}	
	
	
	\begin{abstract}
		A tuple $\underline{T}=(T_1,\dots,T_k)$ of contractions on a Hilbert space $\mathcal H$ is said to be \emph{$q$-commuting} with $\|q\|=1$ if there exists a family of scalars
		$
		q=\{q_{ij}\in\mathbb C : |q_{ij}|=1,\ q_{ij}=q_{ji}^{-1},\ 1\le i<j\le k\}
		$
		such that $T_iT_j=q_{ij}T_jT_i$ for $1\le i<j\le k$.
		In this article, we characterize $q$-commuting pairs of contractions with $\|q\|=1$ that admit a minimal $*$-regular $q$-isometric dilation. We present sufficient conditions for the commutant lifting theorem for such pairs. Moreover, sufficient conditions are obtained for a $q$-commuting triple of contractions with $\|q\|=1$ to admit a $q$-isometric dilation.
		\end{abstract}

	\maketitle

	\section{Introduction}
	
	\noindent Throughout the paper, all operators are bounded linear maps on complex Hilbert spaces. We write $\mathbb{C}$, $\mathbb{D}$ and $\mathbb{T}$ for the complex plane, unit disc $\{z: |z|<1\}$ and unit circle $\{z : |z|=1\}$ respectively, and $\mathcal{B}(\mathcal{H})$ for the algebra of operators on a Hilbert space $\mathcal{H}$. The symbols $\mathbb{N}$, $\mathbb{Z}$ and $\mathbb{Z}_+$ denote the sets of natural numbers, integers and non-negative integers respectively. 
	
	\smallskip 
	
	A \textit{contraction} is an operator of norm at most $1$. A classical theorem due to Sz.-Nagy \cite{NagyII} states that every contraction dilates to a unitary. Ando \cite{Ando} extended Nagy’s dilation theorem to pairs of commuting contractions. However, Parrott \cite{Par} showed that a unitary dilation need not exist for commuting triples of contractions. In this direction, Brehmer \cite{Brehmer} introduced a stronger notion of unitary dilation, called a regular unitary dilation, and characterized the commuting families of contractions that admit such dilations. Another fundamental result in dilation theory is the \emph{commutant lifting theorem}, which states that if \(T\) and \(T'\) are contractions on Hilbert spaces \(\mathcal{H}\) and \(\mathcal{H}'\), respectively, with minimal isometric dilations \(V\) on \(\mathcal{K}\) and \(V'\) on \(\mathcal{K}'\) respectively, and if an operator \(A : \mathcal{H} \to \mathcal{H}'\) intertwines \(T\) and \(T'\), i.e., \(AT = T'A\), then there exists an operator \(B : \mathcal{K} \to \mathcal{K}'\) with $\|B\|=\|A\|$ such that \(BV = V'B\) and \(AP_{\mathcal{H}} = P_{\mathcal{H}'} B\). In contrast, M\"{u}ller \cite{Muller} showed that the commutant lifting type theorem fails in general for two pairs of contractions even if the pairs admit regular unitary dilations. Furthermore, the authors of \cite{Gaspar, GasparII} present sufficient conditions ensuring the existence of such a lifting, where the notion of $*$-regular isometric dilation plays a crucial role. Recall from \cite{Gaspar, Timotin} that an isometric dilation $(V_1, \dotsc, V_k)$ of a commuting tuple $(T_1, \dotsc, T_k)$ of contractions on a Hilbert space $\HS$ is said to be \textit{$*$-regular} if for all $m_1, \dotsc, m_k \in \mathbb{Z}$, we have
\begin{align}\label{eqn_102}
	\left[T_1^{m_1^+} \dotsc T_k^{m_k^+}\right]\left[T_1^{*m_1^-}\dotsc T_k^{*m_k^-}\right]= P_\mathcal{H}\left[V_1^{*m_1^-}\dotsc V_k^{*m_k^-}\right]\left[V_1^{m_1^+} \dotsc V_k^{m_k^+}\right]\bigg|_\mathcal{H}.
\end{align}
Here, $m^+=\max \{m, 0\}$ and $m^-=-\min\{m, 0\}$ for $m \in \mathbb{Z}$. While classical dilation and lifting results mainly concern commuting families of contractions, subsequent work has extended these ideas to the non-commutative setting. In particular, many results were established for $q$-commuting contractions with $\|q\|=1$, e.g., see \cite{Bis:Pal:Sah, K.M., Maji, Pal, PalII, Sebestyen, N1} and the references therein.
	
	\begin{defn}
		A tuple $\underline{T}=(T_1, \dotsc, T_k)$ of operators acting on a Hilbert space $\mathcal{H}$ is said be \textit{$q$-commuting}  with $\|q\|=1$ if there exists a family of scalars $q=\{q_{ij} \in \T \ :  \ q_{ij}=q_{ji}^{-1}, \  1 \leq i < j \leq k\}$ such that $T_iT_j=q_{ij}T_jT_i$ for $1\leq i < j \leq k$. In addition, if $T_iT_j^*=\overline{q}_{ij} T_j^*T_i$, then $\underline{T}$ is said to be \textit{doubly $q$-commuting}. For a $q$-commuting pair $(T_1,T_2)$ with $\|q\|=1$ and $q=\{q_{12}\}$, we also refer to the pair as $q_{12}$-commuting.
	\end{defn}
	
Accordingly, we now recall the notion of isometric and unitary dilations in the $q$-commutative setting. Let $\underline{T}=(T_1, \dotsc, T_k)$ be a $q$-commuting tuple of contractions with $\|q\|=1$ acting on a Hilbert space $\HS$. A $q$-commuting tuple of isometries $\underline{V}=(V_1, \dotsc, V_k)$ with $\|q\|=1$, acting on a Hilbert space $\mathcal{K} \supseteq \HS$, is called a \textit{$q$-isometric dilation} of $\underline{T}$ if
		\[
		T_1^{m_1}\dotsc T_k^{m_k}=P_\HS V_1^{m_1}\dotsc V_k^{m_k}|_{\HS} \quad \text{for all  $m_1, \dotsc, m_k \in \Z_+$.}
		\]	
		The dilation is said to be \textit{minimal} if 
		$
		\mathcal{K}=\ov{\text{span}}\left\{V_1^{m_1}\dotsc V_k^{m_k}h : m_1, \dotsc, m_k \in \Z_+, h \in \HS \right\}.
		$
		If $\underline{V}$ consists of unitaries, then $\underline{V}$ is said to be a  \textit{$q$-unitary dilation}, of $\underline{T}$. In this case, the dilation is called minimal if
		$
		\mathcal{K}=\ov{\text{span}}\left\{V_1^{m_1}\dotsc V_k^{m_k}h : m_1, \dotsc, m_k \in \Z, \ h \in \HS \right\}.
		$

\medskip 

In this article, we primarily focus on $q$-commuting tuples of contractions with $\|q\|=1$ and commutant lifting theorem for pairs of contractions in the $q$-commutative setting. In this direction, Sebesty\'{e}n \cite{SebII} proved the classical commutant lifting theorem to the $q$-commutative setting when $q=-1$. Keshari and Mallick \cite{K.M.} obtained  generalization of the commutant lifting theorem and Ando’s dilation theorem in the $q$-commutative setting. In view of these results, one naturally  considers the following question.

\medskip 

\noindent \textbf{Question.}  Let $\underline{T}=(T_1, T_2)$ and $\underline{T}'=(T_1', T_2')$ be $q$-commuting pairs of contractions with $\|q\|=1$ on Hilbert spaces $\HS$ and $\HS'$, respectively. Suppose an operator $A: \HS \to \HS'$ \textit{intertwines} $\underline{T}$ and $\underline{T}'$, i.e., $AT_1=T_1'A$ and $AT_2=T_2'A$. Assume that $\underline{U}=(U_1, U_2)$ on $\KS$ and $\underline{U}'=(U_1', U_2')$ on $\KS'$ are minimal $q$-isometric dilations of $\underline{T}$ and $\underline{T}'$ respectively.  Can $A$ be lifted to an operator $B: \mathcal K \to \mathcal K'$, meaning $A^* = B^*|_{\mathcal H'}$ (or equivalently $AP_{\mathcal H} = P_{\mathcal H'} B$), that intertwines $\underline{U}$ and $\underline{U}' ?$

\medskip 

As mentioned earlier, M\"{u}ller \cite{Muller} showed in the commutative case that the lifting problem above may fail to have a solution, even when $\underline{U}$ and $\underline{U}'$ are regular unitary dilations. In this article, we provide sufficient conditions under which such a lifting exists in the $q$-commutative setting, extending some of the earlier works in the commutative case \cite{Gaspar, GasparII}. To do so, we recall from \cite{PalII, N1} the notions of regular and $*$-regular isometric dilations in the $q$-commutative setting. We say that a $q$-commuting tuple  $\underline{T}=(T_1, \dotsc, T_k)$ of contractions with $\|q\|=1$ acting on a Hilbert space $\mathcal{H}$ admits a \textit{regular $q$-unitary dilation} if there exist a Hilbert space $\mathcal{K} \supseteq \mathcal{H}$ and a $q$-commuting tuple $\underline{U}=(U_1, \dotsc, U_k)$ of unitaries on $\mathcal{K}$ such that
	\begin{align}\label{eqn_reg}
		\left[T_{{1}}^{*m_{{1}}^-}\dotsc T_{{k}}^{*m_{{k}}^-}\right]\left[T_{{1}}^{m_{{1}}^+}\dotsc T_{{k}}^{m_{{k}}^+}\right]=P_\mathcal{H}\left[U_{{1}}^{*m_{{1}}^-}\dotsc U_{{k}}^{*m_{{k}}^-}\right]\left[U_{{1}}^{m_{{1}}^+}\dotsc U_{{k}}^{m_{{k}}^+}\right]\bigg|_\mathcal{H},
	\end{align}
	for all $m_1, \dotsc, m_k \in\mathbb{Z}$. If the family $\underline{U}$ consists of isometries that satisfies \eqref{eqn_reg}, then $\underline{U}$ is said to be a \textit{regular $q$-isometric dilation}. Furthermore, a $q$-isometric dilation $(V_1, \dotsc, V_k)$ of a $q$-commuting tuple of contractions $(T_1, \dotsc, T_k)$ with $\|q\|=1$ acting on a Hilbert space $\HS$ is said to be \textit{$*$-regular} if for all $m_1, \dotsc, m_k \in \mathbb{Z}$, we have
	\[
	\left[T_1^{m_1^+} \dotsc T_k^{m_k^+}\right]\left[T_1^{*m_1^-}\dotsc T_k^{*m_k^-}\right]=\underset{1\leq i<j \leq k}{\prod}q_{ij}^{m_i^-m_j^+-m_i^+m_j^-} P_\mathcal{H}\left[V_1^{*m_1^-}\dotsc V_k^{*m_k^-}\right]\left[V_1^{m_1^+} \dotsc V_k^{m_k^+}\right]\bigg|_\mathcal{H}.
	\] 
A seminal result from \cite{Brehmer} states that a commuting tuple $\underline{T}=(T_1, \dotsc, T_k)$ of contractions has a regular unitary dilation if and only if it satisfies Brehmer's positivity condition, i.e.,
\begin{equation*}
	\Delta(u)=	\underset{\{\alpha_1, \dotsc, \alpha_m\} \subset u}{\sum}(-1)^m(T_{\alpha_1}\dotsc T_{\alpha_m})^*(T_{\alpha_1}\dotsc T_{\alpha_m}) \geq 0 \ \ \text{for every subset $u$ of $\{1, \dotsc, k\}$.}
\end{equation*}

The authors of \cite{PalII} extended this result in the $q$-commutative setting, proving the following.

\begin{thm}[\cite{PalII}, Theorems 1.7 \& 1.8]\label{thm_103} A $q$-commuting tuple of contractions with $\|q\|=1$ has a regular $q$-unitary dilation if and only if it satisfies Brehmer's positivity condition. In particular, every doubly $q$-commuting tuple of contractions admits a regular $q$-unitary dilation.
	\end{thm}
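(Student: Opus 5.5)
We follow Brehmer's classical scheme, adapted to the $q$-twisted setting. A regular $q$-unitary dilation exists if and only if a certain operator-valued function on $\Z^k$ is positive definite. So the plan is to build that function from $\underline{T}$, show that its positive definiteness is equivalent to Brehmer's condition, and then run a Naimark-type construction.

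\textbf{Step 1: the kernel.} For $m=(m_1,\dots,m_k)\in\Z^k$ set
\[
T(m)=\left[T_1^{*m_1^-}\dotsc T_k^{*m_k^-}\right]\left[T_1^{m_1^+}\dotsc T_k^{m_k^+}\right].
\]
Define the kernel $K(m,n)=c(m,n)\,T(n-m)$. Here $c(m,n)\in\T$ is a cocycle built from the $q_{ij}$. It is chosen so that, if $\underline{U}$ were a $q$-commuting unitary tuple, then with $U(m)=U_1^{m_1}\dotsc U_k^{m_k}$ we would have $U(m)^*U(n)=c(m,n)\,U(n-m)$. This $c$ is computed directly from the relations $U_iU_j=q_{ij}U_jU_i$ and $U_iU_j^*=\ov{q}_{ij}U_j^*U_i$; the latter follows from unitarity.

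\textbf{Step 2: Naimark construction.} Suppose $\sum_{m,n}\la K(m,n)h_n,h_m\ra\ge 0$ for all finitely supported families $(h_n)$. Form the completion $\KS$ of finitely supported $\HS$-valued functions on $\Z^k$ with respect to this form. Define $U_i$ as translation by $e_i$, twisted by the appropriate phase. The cocycle identity makes each $U_i$ isometric and surjective, hence unitary, and makes the $U_i$ $q$-commute. The compressions then reproduce \eqref{eqn_reg}. Conversely, if a regular $q$-unitary dilation exists, $K$ is positive definite automatically. So the theorem reduces to the equivalence
\[
K\ \text{positive definite} \iff \Delta(u)\ge 0 \ \text{for all } u\subseteq\{1,\dots,k\}.
\]

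\textbf{Step 3: the equivalence (main obstacle).} For necessity, I would test positivity of $K$ on vectors supported on $\{0,1\}^u$ with alternating signs. This should produce $\Delta(u)$ exactly as in Brehmer's argument. The phases must be handled here: choose test vectors $h_n=(-1)^{|n|}\,\overline{c(0,n)}\,T(n)h$, or similar, so that the phases cancel.

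For sufficiency, I would mimic Brehmer's (equivalently Sz.-Nagy–Foias's) inductive factorization. Write the quadratic form $\sum\la K(m,n)h_n,h_m\ra$ as a sum of squares $\sum_u\|\Delta(u)^{1/2}(\cdots)\|^2$. The induction is on $k$, splitting off one variable at a time. The key point is that the $q$-commutation only introduces unimodular scalars. After pulling the $T_i$ past one another, every term carries a consistent phase that the cocycle $c$ absorbs. Verifying this consistency is the delicate computation, and I expect it to be the main difficulty.

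\textbf{Step 4: the doubly $q$-commuting case.} Assume in addition $T_iT_j^*=\ov{q}_{ij}T_j^*T_i$. Then the phases cancel in $(T_{\alpha_1}\dotsc T_{\alpha_m})^*(T_{\alpha_1}\dotsc T_{\alpha_m})$, so it equals a product of the commuting positive operators $T_{\alpha_i}^*T_{\alpha_i}$. Hence
\[
\Delta(u)=\prod_{i\in u}(I-T_i^*T_i)\ge 0,
\]
a product of commuting positive contractions. Brehmer's condition therefore holds, and the first part of the theorem gives the regular $q$-unitary dilation.
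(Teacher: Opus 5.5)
The paper gives no proof of this statement; it is quoted from \cite{PalII}, so your proposal has to stand on its own. Step 4 is correct: double $q$-commutativity makes $T_i^*T_i$ commute with $T_j$ and $T_j^*$ for $j\neq i$, so $\Delta(u)=\prod_{i\in u}(I-T_i^*T_i)\ge 0$. A twisted GNS construction is also a sound framework. However, the kernel of Step 1 is wrong as written.

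\textbf{The kernel is missing a reordering phase.} Regularity \eqref{eqn_reg} prescribes the compression of $[U_1^{*p_1^-}\dotsc U_k^{*p_k^-}][U_1^{p_1^+}\dotsc U_k^{p_k^+}]=\mu(p)\,U(p)$, and this phase $\mu(p)$ is not identically $1$, since $U_2^*U_1=q_{12}U_1U_2^*$. Concretely, let $n-m=e_1-e_2$. From $(U_1U_2^*)^*=q_{12}U_1^*U_2$ you get $c(n,m)=q_{12}\overline{c(m,n)}$, while $T(m-n)=T_1^*T_2=T(n-m)^*$. Hence $K(n,m)=q_{12}K(m,n)^*$.

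So your $K$ is not even Hermitian when $q_{12}\neq 1$ and $T_2^*T_1\neq 0$. This happens, for instance, when $\underline{T}$ is itself a $q$-commuting pair of unitaries, which trivially has a regular $q$-unitary dilation. Consequently both \emph{``conversely, $K$ is positive definite automatically''} and \emph{``the compressions reproduce \eqref{eqn_reg}''} fail. The kernel must be $K(m,n)=c(m,n)\,\overline{\mu(n-m)}\,T(n-m)$, i.e.\ exactly the value of $P_{\HS}U(m)^*U(n)|_{\HS}$ forced by \eqref{eqn_reg}.

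The necessity test vector has a similar problem. The correct one is $\sum_{v\subseteq u}(-1)^{|v|}U_v^*T_vh$, supported on $-\{0,1\}^u$. Your choice $h_n=(-1)^{|n|}T(n)h$ on $\{0,1\}^u$ already gives $\|h\|^2+\|T_1h\|^2-2\,\mathrm{Re}\la T_1^2h,h\ra$ for $k=1$. With the right vector the phases do cancel, and you get $\sum_{v,w\subseteq u}(-1)^{|v|+|w|}\|T_{v\cup w}h\|^2=\la \Delta(u)h,h\ra$.

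\textbf{Sufficiency is not proved.} The more serious gap is that sufficiency of Brehmer's condition, which is the real content of the theorem, is only asserted. \emph{``Every term carries a consistent phase that the cocycle absorbs''} is an expectation, not an argument. You can supply this step without redoing the Sz.-Nagy--Foias sum-of-squares computation, by untwisting with a tensor product:
\begin{enumerate}
\item Take unitaries $W_1,\dotsc,W_k$ on $\ell^2(\Z^k)$ with $W_iW_j=\overline{q}_{ij}W_jW_i$, e.g.\ $W_i\delta_n=\prod_{j<i}q_{ji}^{\,n_j}\,\delta_{n+e_i}$, and set $W(n)=W_1^{n_1}\dotsc W_k^{n_k}$.
\item Then $\tilde T_i=T_i\otimes W_i$ are commuting contractions with $(\tilde T_{\alpha_1}\dotsc\tilde T_{\alpha_m})^*(\tilde T_{\alpha_1}\dotsc\tilde T_{\alpha_m})=(T_{\alpha_1}\dotsc T_{\alpha_m})^*(T_{\alpha_1}\dotsc T_{\alpha_m})\otimes I$. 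So $\Delta_{\tilde T}(u)=\Delta_T(u)\otimes I\ge 0$.
\item The classical Brehmer theorem therefore makes $p\mapsto \tilde T(p)=T(p)\otimes[W_1^{*p_1^-}\dotsc W_k^{*p_k^-}][W_1^{p_1^+}\dotsc W_k^{p_k^+}]$ positive definite on $\Z^k$.
\item Whenever $\underline U$ is $q$-commuting, the operators $U_i\otimes W_i$ commute. Hence the phases $c,\mu$ attached to $\overline{q}$ are the conjugates of those attached to $q$.
\item Testing positivity of $\tilde T$ on the vectors $h_n\otimes W(n)^*\xi$ with $\|\xi\|=1$ therefore yields exactly $\sum_{m,n}\la K(m,n)h_n,h_m\ra\ge 0$ for the corrected kernel.
\end{enumerate}
Your Step 2 then produces the regular $q$-unitary dilation.
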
 

It is evident from Theorem \ref{thm_103} that every doubly $q$-commuting tuple of contractions admits a minimal $q$-isometric dilation. A natural question arises if such minimal $q$-isometric dilations are also doubly $q$-commuting. Motivated by the works \cite{Gaspar, Timotin}, the author of \cite{N1} characterized such $q$-commuting minimal isometric dilations.

	\begin{thm}[\cite{N1}, Theorems 2.15 \& 2.16]\label{thm_104}
		Let $\underline{T}=(T_1, \dotsc, T_k)$ be a $q$-commuting tuple of contractions acting on a Hilbert space $\HS$. Then a minimal $q$-isometric dilation of $\underline{T}$ is doubly $q$-commuting if and only if it is $*$-regular. Furthermore, $\underline{T}$ is doubly $q$-commuting if and only if $\underline{T}$ has a minimal $q$-isometric dilation which is both regular and $*$-regular.
	\end{thm}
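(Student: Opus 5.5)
The plan is to reduce everything to inner products against the total set $\{V_1^{n_1}\dotsc V_k^{n_k}h : n_i\in\Z_+,\ h\in\HS\}$ given by minimality. Write $V^n=V_1^{n_1}\dotsc V_k^{n_k}$ and $V^{*n}=V_k^{*n_k}\dotsc V_1^{*n_1}$. The first step is a preliminary fact: for a minimal $q$-isometric dilation $\underline V$ of $\underline T$, the space $\HS$ is co-invariant and $V_i^*|_{\HS}=T_i^*$. To see this, compare $\la V_i^*h, V^ng\ra=\la h,V_iV^ng\ra$ with $\la T_i^*h,V^ng\ra=\la h,T_iT^ng\ra$. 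Moving $V_i$ into position costs the same unimodular factor $\prod_{j<i}q_{ji}^{\pm n_j}$ on both sides. Since $P_\HS V^m|_\HS=T^m$, the two inner products agree for all $n$ and $g$, so $V_i^*h-T_i^*h\perp\KS$. Consequently $P_\HS V^b = T^bP_\HS$ on $\HS$-valued compressions, and $P_\HS V^bV^{*a}|_\HS=T^bT^{*a}$.

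\emph{Doubly $q$-commuting $\Rightarrow$ $*$-regular.} Fix $m\in\Z^k$ and put $a=m^-$, $b=m^+$; these have disjoint supports. Using $V_iV_j^*=\ov q_{ij}V_j^*V_i$ for $i\neq j$, together with $q$-commutativity among the $V_i$ and among the $V_i^*$, we can rewrite $V^{*a}V^{b}$ as a unimodular scalar times $V^bV^{*a}$. No index appears on both sides, so no relation $V_i^*V_i=I$ is needed. Compressing with the preliminary fact gives $P_\HS V^{*a}V^b|_\HS=c\,T^bT^{*a}$. The only real work is bookkeeping to check that $c^{-1}=\prod_{i<j}q_{ij}^{m_i^-m_j^+-m_i^+m_j^-}$.

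\emph{$*$-regular $\Rightarrow$ doubly $q$-commuting.} We must show $V_iV_j^*x=\ov q_{ij}V_j^*V_ix$ for $x=V^nh$, and we test this against $y=V^pg$. Both sides become $\la V^nh, W V^pg\ra$ with $W\in\{V_jV_i^*,\ V_i^*V_j\}$. We then push all starred factors to the left. Whenever a $V_l^*$ meets a $V_l$, the identity $V_l^*V_l=I$ cancels them. Every remaining starred/unstarred pair has distinct indices, so, as in the $q$-commutation step, it can be reordered at the cost of a scalar. Each side thus reduces to $\la h, P_\HS V^{*\alpha}V^{\beta}g\ra$ times a phase, with $\alpha$ and $\beta$ of disjoint support. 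The $*$-regularity hypothesis converts each of these into $\la h,T^{\beta}T^{*\alpha}g\ra$ times the prescribed phase. The two sides then coincide by $q$-commutativity of $\underline T$ and $\underline T^*$.

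\emph{Main obstacle.} The main obstacle is the case distinction in this cancellation: whether $n_j$ or $p_i$ vanish. One must check that the accumulated phases match in every case. I expect to handle this by writing the general formula $V^{*a}V^b=\phi(a,b)\,V^{*(a-b)^+}V^{(b-a)^+}$, which holds on all of $\KS$ since each $V_l$ is an isometry, and computing $\phi$ once.

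\emph{Second assertion.} Suppose $\underline V$ is a minimal dilation that is both regular and $*$-regular. By the first part, $\underline V$ is doubly $q$-commuting. Regularity with $m=-e_j+e_i$ gives $T_j^*T_i=P_\HS V_j^*V_i|_\HS$, and $*$-regularity with the same $m$ gives $T_iT_j^*=\ov q_{ij}P_\HS V_j^*V_i|_\HS$ up to the stated phase. Comparing these yields $T_iT_j^*=\ov q_{ij}T_j^*T_i$.

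Conversely, if $\underline T$ is doubly $q$-commuting, Theorem \ref{thm_103} gives a regular $q$-unitary dilation. Restricting it to $\ov{\mathrm{span}}\{V^nh\}$ yields a minimal regular $q$-isometric dilation. To show this dilation is $*$-regular, by the first part it suffices to show it is doubly $q$-commuting. We prove this by the same inner-product reduction as above, now using regularity of $\underline V$ in place of $*$-regularity and the relation $T_iT_j^*=\ov q_{ij}T_j^*T_i$ to identify the two reduced $T$-expressions.
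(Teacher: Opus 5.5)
The gap is in the direction ``$*$-regular $\Rightarrow$ doubly $q$-commuting''. The preliminary fact $V_i^*|_{\HS}=T_i^*$, the direction ``doubly $q$-commuting $\Rightarrow$ $*$-regular'', and the forward half of the second assertion (comparing regularity and $*$-regularity at $m=e_i-e_j$) are fine.

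After testing, your left-hand side is $\la h, V^{*n}V_jV_i^*V^pg\ra$. A cancellation $V_l^*V_l=I$ removes the bad pair $V_jV_i^*$ only if $n_j\ge 1$ or $p_i\ge 1$. When $n_j=p_i=0$, moving $V_i^*$ to the left of $V_j$ is exactly the relation $V_jV_i^*=\ov q_{ji}V_i^*V_j$ you are trying to prove. This already happens for $n=e_i$, $p=e_j$, where you face $\la h, V_i^*V_jV_i^*V_jg\ra$. So ``as in the $q$-commutation step'' silently imports the hypothesis of the other direction. If that reordering were allowed, the two sides would coincide word for word and $*$-regularity would play no role.

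Neither of your tools reaches this word. The identity $V^{*a}V^b=\phi\,V^{*(a-b)^+}V^{(b-a)^+}$ and the $*$-regularity hypothesis both concern only words with all starred factors on the left, and $V^{*n}V_jV_i^*V^p$ is not of that form. So the case split you flag as the main obstacle is not phase bookkeeping; it is where the content of the theorem lies. Your converse for the second assertion reuses the same reduction (with regularity in place of $*$-regularity), so it inherits the gap.

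A repair is to never form $V_jV_i^*$ inside a word. For $n_j\ge 1$ you trivially have $V_j^*V^nh=\phi\,V^{n-e_j}h$. For $n_j=0$, first prove the one-sided identity $V_j^*V^nh=\phi_j(n)\,V^nT_j^*h$ by pairing with the total set:
\begin{enumerate}
\item $\la V_j^*V^nh, V^pg\ra=\la h, V^{*n}V_jV^pg\ra$ is a starred-left word. By your identity and $*$-regularity it equals a phase times $\la h, T^{\beta}T^{*\alpha}g\ra$, with $\alpha=(n-p)^+$ and $\beta=(p-n)^++e_j$ (disjoint supports because $n_j=0$).
\item $\la V^nT_j^*h, V^pg\ra=\la T_j^*h, V^{*n}V^pg\ra$ reduces to the same $T$-expression.
\end{enumerate}
This is where phase bookkeeping is genuinely needed: you must check that the ratio of the two phases depends only on $n$ and $j$, not on $p$. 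Once the identity holds, $V_iV_j^*V^nh$ and $\ov q_{ij}V_j^*V_iV^nh$ are explicit multiples of the same vector. That vector is $V^{n+e_i-e_j}h$ if $n_j\ge1$, and $V^{n+e_i}T_j^*h$ if $n_j=0$. Comparing phases on this total set gives the relation.

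For the converse of the second assertion you need none of this. If $\underline T$ is doubly $q$-commuting, then $T^{m^+}T^{*m^-}=\prod_{i<j}q_{ij}^{m_i^-m_j^+-m_i^+m_j^-}\,T^{*m^-}T^{m^+}$, with the same ordering of starred factors on both sides. Hence regularity of the minimal dilation obtained from Theorem~\ref{thm_103} is literally $*$-regularity.
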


As shown in the above theorem, $*$-regular $q$-isometric dilations are essential in the study of $q$-commuting contractions with $\|q\|=1$. Motivated by this, we provide in Theorem \ref{thm_001} a characterization of $q$-commuting pairs of contractions with $\|q\|=1$ that admit a minimal $*$-regular $q$-isometric dilation, leading to a structure theorem for such pairs. In particular, we show that for any such pair $(T_1,T_2)$, the operator $T_2$ lifts to a contraction on the minimal isometric dilation space of $T_1$ that doubly $q$-commutes with the dilation of $T_1$. Building on this, we obtain in Theorem \ref{thm_intertw} a commutant lifting theorem for two $q$-commuting pairs of contractions with $\|q\|=1$, which is one of the main results of the article.

\medskip 

As mentioned earlier, Parrott \cite{Par} constructed an example of a commuting triple of contractions which does not admit a commuting isometric dilation, and therefore an analogous conclusion fails in the $q$-commutative setting. However, Theorem \ref{thm_104} guarantees that a doubly $q$-commuting triple $(T_1, T_2, T_3)$ of contractions admits a $q$-isometric dilation. This naturally leads to the question of whether the same conclusion holds for a strictly smaller subclass. As an application of Theorem \ref{thm_intertw}, we first present in Theorem \ref{thm_suff} a sufficient condition for a $q$-isometric dilation for a $q$-commuting triple of contractions with $\|q\|=1$. We then conclude this article by showing that the doubly $q$-commutative condition on the entire triple $(T_1, T_2, T_3)$ may be relaxed and replaced by requiring only that the pairs $(T_1, T_2)$ and $(T_1, T_3)$ be doubly $q$-commuting.

\section{Intertwining of $*$-regular $q$-isometric dilations}\label{sec_02}

\noindent For a given $q$-commuting pair of contractions $(T_1, T_2)$ with $\|q\|=1$ having a minimal $*$-regular $q$-isometric dilation, our first result establishes the lifting of $T_2$ to a contraction on the minimal isometric dilation space of $T_1$, where the lifted operator doubly $q$-commutes with the dilation of $T_1$. This also leads to a structure theorem for such pairs in terms of unitaries and completely non-unitary contractions. Recall that a contraction $T$ on a Hilbert space $\mathcal H$ is said to be completely non-unitary (c.n.u.) if there is no nonzero reducing subspace of $\mathcal H$ on which $T$ acts as a unitary.

\begin{thm}\label{thm_001}
Let $\underline{T}=(T_1, T_2)$ be a $q$-commuting pair of contractions with $\|q\|=1$ acting on a Hilbert space $\HS$. Then the following are equivalent: 
\begin{enumerate}
\item $\underline{T}$ has a minimal $*$-regular $q$-isometric dilation;
\item If $W_1$ acting on a Hilbert space $\KS_1$ is the minimal isometric dilation of $T_1$, then there exists a contraction $W_2$ on $\KS_1$ such that $(W_1, W_2)$ is doubly $q$-commuting and
\[
(T_1^*, T_2^*)=(W_1^*|_{\HS}, W_2^*|_{\HS}).
\]
In particular, $(T_1^*, T_2^*)$ is the restriction to a joint invariant subspace of an operator
\[
\bigoplus_{\eta \subseteq \{1,2 \}} \underline{W}^*_\eta \in \mathcal{B}\left( \bigoplus_{\eta \subseteq \{1,2 \}} \mathcal{K}_{1\eta}\right),
\] 
where $\underline{W}_\eta^*=(W_1^*|_{\KS_{1\eta}}, W_2^*|_{\KS_{1\eta}})$. Moreover, $W_i^*|_{\KS_{1\eta}}$ is a unitary if $i \in \eta$, and a c.n.u. contraction if $i \notin \eta$ for each $\eta$. Also, $(W_1^*|_{\KS_{1\eta}}, W_2^*|_{\KS_{1\eta}})$ is doubly $q$-commuting for each $\eta$.
\end{enumerate}
\end{thm}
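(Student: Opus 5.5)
The plan is to prove (1)$\Rightarrow$(2) by compressing the second isometry of the dilation to the minimal dilation space of $T_1$, and (2)$\Rightarrow$(1) by applying a double $q$-commuting dilation result (Theorem \ref{thm_104}) to the pair $(W_1,W_2)$. The decomposition into the spaces $\KS_{1\eta}$ will follow from a joint Wold--Sz.-Nagy type splitting of doubly $q$-commuting contractions.

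\textbf{(1)$\Rightarrow$(2).} Let $(V_1,V_2)$ on $\KS$ be a minimal $*$-regular $q$-isometric dilation. By Theorem \ref{thm_104} it is doubly $q$-commuting, so $V_1V_2^*=\ov{q}_{12}V_2^*V_1$. Put $\KS_1=\ov{\text{span}}\{V_1^nh: n\ge 0,\ h\in\HS\}$. Then $W_1=V_1|_{\KS_1}$ is a minimal isometric dilation of $T_1$, since $\HS$ is co-invariant for $V_1$. The key point is that $\KS_1$ is invariant under $V_2^*$. By minimality of $(V_1,V_2)$, the space $\HS$ is co-invariant for $V_2$, so $V_2^*\HS\subseteq\HS$. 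Double $q$-commutativity then gives
\[
V_2^*V_1^nh=q_{12}^{\,n}V_1^nV_2^*h\in\KS_1 .
\]
Define $W_2=P_{\KS_1}V_2|_{\KS_1}$. Then $W_2^*=V_2^*|_{\KS_1}$ is a contraction and $W_2^*|_\HS=T_2^*$. Next, $W_1W_2^*=\ov{q}_{12}W_2^*W_1$ is just the restriction of the relation for $V_1,V_2$ to the $V_1$- and $V_2^*$-invariant space $\KS_1$. Likewise $W_2^*W_1^*=\ov{q}_{12}W_1^*W_2^*$: since $\KS_1$ is invariant for $V_1$ and $V_2^*$, we have $P_{\KS_1}V_2P_{\KS_1}V_1=P_{\KS_1}V_2V_1$, so the $q$-relation compresses correctly. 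Hence $(W_1,W_2)$ is doubly $q$-commuting. Since $\KS_1$ is the minimal dilation space of $T_1$, which is unique up to unitary equivalence fixing $\HS$, the statement holds for any choice of $W_1$.

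\textbf{(2)$\Rightarrow$(1).} The pair $(W_1,W_2)$ is a doubly $q$-commuting pair of contractions. By Theorems \ref{thm_103}--\ref{thm_104} it has a minimal $q$-isometric dilation $(V_1,V_2)$ on some $\KS\supseteq\KS_1$ which is regular and $*$-regular. Since $\HS$ is co-invariant for $W_1,W_2$, it is co-invariant for $V_1,V_2$, so $(V_1,V_2)$ is a $q$-isometric dilation of $\underline T$. The $*$-regularity identities for $\underline T$ follow from those for $(W_1,W_2)$ by compressing to $\HS$. The mixed products $T_1^{m^+}T_1^{*m^-}$ compress correctly because $W_i^*|_\HS=T_i^*$, so $P_\HS X|_\HS$ equals the corresponding $T$-word whenever $X$ has the form $W^{+}W^{*-}$.

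This is the step I expect to be the main obstacle: the dilation must also be \emph{minimal} for $\underline T$. I would restrict to
\[
\KS_0=\ov{\text{span}}\{V_1^{m}V_2^{n}h: m,n\ge 0,\ h\in\HS\}.
\]
This space is invariant under $V_1,V_2$, so the restrictions are $q$-commuting isometries and still $*$-regular dilations of $\underline T$, because the defining identities only involve vectors of $\HS$. Alternatively, I would verify directly that $\KS_0$ reduces double $q$-commutativity, using that $V_2^*V_1^mV_2^nh$ reduces to words in $V_1,V_2$ applied to $T_2^*h$ or $h$.

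\textbf{Decomposition.} Apply the Wold-type decomposition of the isometry $W_1$: $\KS_1=\KS_u\oplus\KS_s$, where $\KS_u=\bigcap_n W_1^n\KS_1$ and $W_1$ is unitary on $\KS_u$ and a shift on $\KS_s$. Double $q$-commutativity shows that $W_2,W_2^*$ leave both pieces invariant, since $W_2W_1^n=q^nW_1^nW_2$ and so on. Then, on each piece, split $W_2$ by its canonical decomposition into a unitary part and a c.n.u.\ part; the unitary part is $\{x:\|W_2^nx\|=\|x\|=\|W_2^{*n}x\|\ \forall n\}$. This space is reducing for $W_1$ by the doubly $q$-commuting relations, as $|q|=1$ preserves norms. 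This yields the four spaces $\KS_{1\eta}$, $\eta\subseteq\{1,2\}$, on each of which the restricted pair is doubly $q$-commuting. On each, $W_1^*$ is unitary or is the adjoint of a shift (c.n.u.), and $W_2^*$ is unitary or c.n.u., as required. Finally, $\HS$ is invariant for $W_1^*\oplus W_2^*$.
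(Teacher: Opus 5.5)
Your proof is correct. The (1)$\Rightarrow$(2) direction is the paper's argument, and the (2)$\Rightarrow$(1) direction takes a different route.

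\textbf{(1)$\Rightarrow$(2).} Like the paper, you compress $V_2$ to $\mathcal K_1=\ov{\text{span}}\{V_1^nh\}$ and verify the relations. Your remark that $\mathcal K_1$ is $V_2^*$-invariant, so $W_2^*=V_2^*|_{\mathcal K_1}$, packages the paper's vector computation neatly. One justification needs tightening. Invariance of $\mathcal K_1$ under $V_1$ and $V_2^*$ only gives $W_2W_1=P_{\mathcal K_1}V_2V_1|_{\mathcal K_1}$. To get $W_1W_2=P_{\mathcal K_1}V_1V_2|_{\mathcal K_1}$ you also need $V_1^*\mathcal K_1\subseteq\mathcal K_1$, and without it the conclusion can fail. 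For example, take $V_1e_{(i,j)}=e_{(i+1,j)}$ and $V_2e_{(i,j)}=e_{(i,j+1)}$ on $\ell^2(\Z^2)$, with $\mathcal K_1=\ov{\text{span}}\{e_{(i,j)}:j\le i\}$. Then $W_2W_1e_{(0,0)}=e_{(1,1)}$ but $W_1W_2e_{(0,0)}=0$. In your setting the extra invariance does hold, since $V_1^*h=T_1^*h\in\mathcal H$ and $V_1^*V_1^nh=V_1^{n-1}h$. So $\mathcal K_1$ reduces $V_1$, $W_1^*=V_1^*|_{\mathcal K_1}$, and $W_2^*W_1^*=\ov q_{12}W_1^*W_2^*$ is just a restriction. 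This is what the paper's computation on vectors $h+V_1^nx$ encodes.

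\textbf{(2)$\Rightarrow$(1).} The paper checks that $\Delta_{\{1,2\}}(T_1^*,T_2^*)=P_{\mathcal H}(I-W_1W_1^*)(I-W_2W_2^*)|_{\mathcal H}\ge 0$. It then gets a regular $q$-unitary dilation of $(T_1^*,T_2^*)$ from Theorem \ref{thm_103} and converts it via \cite[Corollary 2.10]{N1} into a minimal $*$-regular $q$-isometric dilation of $\underline T$. You instead apply Theorem \ref{thm_104} to $(W_1,W_2)$ and push that dilation down the chain $\mathcal H\subseteq\mathcal K_1\subseteq\mathcal K$ of co-invariant subspaces. Your route avoids Brehmer's condition and the duality in \cite{N1} between regular dilations of $\underline T^*$ and $*$-regular dilations of $\underline T$. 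It also realizes the dilation of $\underline T$ as a dilation of the lifted pair. The paper's route gives, in passing, that (2) forces Brehmer positivity of $(T_1^*,T_2^*)$.

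The minimality issue you single out as the main obstacle is actually automatic. $\mathcal K_1$ is co-invariant for $V_1$ and $W_1$ is an isometry, so $\|P_{\mathcal K_1}V_1k\|=\|W_1k\|=\|k\|=\|V_1k\|$ forces $V_1|_{\mathcal K_1}=W_1$. Hence $\mathcal K_1\subseteq\ov{\text{span}}\{V_1^mh\}$. Moving $V_1$-powers past $V_2$-powers up to unimodular factors then gives $\mathcal K_0=\mathcal K$. Your restriction step is valid but not needed.

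Your sketch of the four-fold decomposition reproduces what the paper obtains by citing the Wold-type decomposition of \cite{Maji, Pal}.
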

	
	\begin{proof}
		By $q$-commutativity of $\underline{T}$, we have that $T_1T_2=q_{12}T_2T_1$ for some $q_{12} \in \T$. Suppose $\underline{T}$ has a minimal $*$-regular $q$-isometric dilation. By Theorem \ref{thm_104}, $\underline{T}$ admits a minimal $q$-isometric dilation $\underline{V}=(V_1, V_2)$ on a Hilbert space $\mathcal{K}_+$, which is doubly $q$-commuting.  By the minimality condition, 
		$
		\KS_+=\overline{\text{span}}\left\{V_1^m V_2^n x: x \in \HS, m, n \in \Z_+\right\}.
		$
	It follows from Lemma 2.11 in \cite{N1} that $(T_1^*, T_2^*)=(V_1^*|_{\HS}, V_2^*|_{\HS})$. Let $W_1$ acting on a Hilbert space $\KS_1$ be the minimal isometric dilation of $T_1$. Again by minimality, $\KS_1=\overline{\text{span}}\{V_1^mx: x \in \HS, m \in \Z_+\}$ and $W_1=V_1|_{\KS_1}$. Here, we have used the fact that the minimal isometric dilation of a contraction is unique (up to a unitary equivalence). Define $W_2=P_{\KS_1}V_2|_{\KS_1}$, which is a contraction on $\KS_1$. Since $(T_1^*, T_2^*)=(V_1^*|_{\HS}, V_2^*|_{\HS})$, we have 
	\begin{align}\label{eqn_WT}
	W_i^*x=P_{\KS_1}V_i^*x=P_{\KS_1}T_i^*x=T_i^*x \quad \text{for all $x \in \HS$ and so,} \quad W_i^*|_{\HS}=V_i^*|_{\HS}=T_i^*
	\end{align}
for $i=1, 2$. We now prove that $W_2^*W_1=\overline{q}_{21}W_1W_2^*$. For $m \in \Z_+$ and $x \in \HS$, we have that 
\begin{align*}
	W_2^*W_1V_1^mx=W_2^*V_1^{m+1}x=P_{\KS_1}V_2^*V_1^{m+1}x
	=\overline{q}_{21}^{m+1}P_{\KS_1}V_1^{m+1}V_2^*x
	=\overline{q}_{21}^{m+1}V_1^{m+1}T_2^*x,
\end{align*}
where the last equality holds since $T_2^*=V_2^*|_{\HS}$ and $V_1^{m+1}T_2^*x \in \KS_1$. Furthermore, 
\begin{align*}
	W_1W_2^*V_1^mx=W_1P_{\KS_1}V_2^*V_1^{m}x=\overline{q}_{21}^mW_1P_{\KS_1}V_1^{m}V_2^*x
	=\overline{q}_{21}^{m}W_1P_{\KS_1}V_1^{m}T_2^*x 
	=\overline{q}_{21}^{m}W_1V_1^{m}T_2^*x 
	=\overline{q}_{21}^{m}V_1^{m+1}T_2^*x,
\end{align*}
where the last equality holds as $V_1^{m}T_2^*x \in \KS_1$. Thus, $W_2^*W_1V_1^mx=\overline{q}_{21}W_1W_2^*V_1^mx$ and by continuity, $W_2^*W_1=\overline{q}_{21}W_1W_2^*$. Next, we show that $W_2^*W_1^*=q_{21}W_1^*W_2^*$. Let $x, h \in \HS$ and $n \geq 1$. Then
\begin{align*}
	W_2^*W_1^*(h+V_1^nx)
	&=T_2^*T_1^*h+W_2^*W_1^*W_1^nx \quad [\text{by \eqref{eqn_WT}}]\\
	&=q_{21}T_1^*T_2^*h+W_2^*W_1^{n-1}x\\
	&=q_{21}T_1^*T_2^*h+\overline{q}_{21}^{n-1}W_1^{n-1}W_2^*x \quad [\text{since $W_2^*W_1=\overline{q}_{21}W_1W_2^*$}]\\
&=q_{21}W_1^*W_2^*h+\overline{q}_{21}^{n-1}W_1^*(W_1^{n}W_2^*)x \quad [\text{by \eqref{eqn_WT}}]\\
&=q_{21}W_1^*W_2^*h+\overline{q}_{21}^{n-1}q_{21}^nW_1^*W_2^*W_1^{n}x \quad [\text{since $W_1W_2^*=q_{21}W_2^*W_1$}]\\
&=q_{21}W_1^*W_2^*(h+V_1^nx),
\end{align*}
and thus by continuity arguments, it follows that $W_2^*W_1^*=q_{21}W_1^*W_2^*$. Consequently, $(W_1, W_2)$ is a doubly $q$-commuting pair of contractions on $\KS_1$. It follows from Wold type decomposition for doubly $q$-commuting contractions (see \cite{Maji, Pal}) that there exist $4$ joint reducing subspaces $\{\KS_{1\eta}: \eta \subseteq \{1,2 \}\}$ for $W_1^*, W_2^*$ such that 
\[
\KS=\bigoplus_{\eta \subseteq \{1,2 \}} \mathcal{K}_{1\eta} \quad \text{and} \quad \underline{W}^*=\bigoplus_{\eta \subseteq \{1,2 \}} \underline{W}^*_\eta,
\]
where $\underline{W}_\eta^*=(W_1^*|_{\KS_{1\eta}}, W_2^*|_{\KS_{1\eta}})$. Moreover, $W_i^*|_{\KS_{1\eta}}$ is a unitary if $i \in \eta$, and a c.n.u. contraction if $i \notin \eta$ for each $\eta$. Also, $(W_1^*|_{\KS_{1\eta}}, W_2^*|_{\KS_{1\eta}})$ is doubly $q$-commuting for every $\eta \subseteq \{1,2 \}$.

\medskip 

Conversely, let $W_1$ acting on a Hilbert space $\KS_1$ be the minimal isometric dilation of $T_1$, and let $(W_1, W_2)$ be a doubly $q$-commuting pair of contractions on $\KS_1$ such that $(T_1^*, T_2^*)=(W_1^*|_{\HS}, W_2^*|_{\HS})$. By minimality, $\KS_1=\overline{\text{span}}\{W_1^mx: x \in \HS, m \in \Z_+\}$. For $m \in \Z_+$ and $x \in \HS$, it is evident that 
\[
T_1^{*m}T_2^{*}=W_1^{*m}W_2^{*}|_{\HS} \quad \text{and so,} \quad P_{\HS}W_2(W_1^mx)=T_2T_1^mx=T_2P_{\HS}W_1^mx.
\]
Thus, $P_{\HS}W_2=T_2P_{\HS}$ and $P_{\HS}W_2W_1=T_2T_1P_{\HS}$.  By doubly $q$-commutativity of $(W_1, W_2)$, we have
\begin{align}\label{eqn_W12}
P_{\HS}W_1W_1^*W_2W_2^*x=P_{\HS}W_2W_1W_1^*W_2^*x=T_2T_1P_{\HS}W_1^*W_2^*x
=T_2T_1T_1^*T_2^*x.
\end{align}
The Brehmer's positivity condition for the $q$-commuting pair $(T_1^*, T_2^*)$ reduces to 
\begin{align*}
\Delta_{\{1, 2\}}(T_1^*, T_2^*)&=I-T_1T_1^*-T_2T_2^*+T_2T_1T_1^*T_2^*
=P_{\HS}(I-W_1W_1^*)(I-W_2W_2^*)|_{\HS},
\end{align*}
where the last equality follows from \eqref{eqn_W12} and thus, $(T_1^*, T_2^*)$ satisfies the Brehmer's positivity condition. It follows from Theorem \ref{thm_103} that $(T_1^*, T_2^*)$ admits  a minimal regular $q$-unitary dilation. We have by Corollary 2.10 in \cite{N1} that $(T_1, T_2)$ admits a minimal $*$-regular $q$-isometric dilation.  
\end{proof}
 
The following example shows that if a $*$-regular $q$-isometric dilation is replaced by a regular one, then the conclusion of the above theorem may not hold.

\begin{eg}\label{eg_1}
	Let $\{e_1, e_2, e_3\}$ be the standard orthonormal vectors in $\C^3$. Consider the operator pair $\underline{T}=(T_1, T_2)$ on $\HS=\C^3$ defined as 
	\[
T_1(\alpha e_1+ \beta e_2+ \gamma e_3)=\gamma e_1 \quad \text{and} \quad 	T_2(\alpha e_1+ \beta e_2+ \gamma e_3)=\beta e_1
\]
	for $\alpha, \beta, \gamma \in \C$. Some routine computation shows that $T_1T_2=T_2T_1=0$ and so, $\underline{T}$ is a $q$-commuting pair of contractions with $\|q\|=1$ for any choice of $q=\{q_{12}\}$ in $\T$. Moreover, we have
	\[
T_1^*(\alpha e_1+ \beta e_2+ \gamma e_3)=\alpha e_3 \quad \text{and} \quad 	T_2^*(\alpha e_1+ \beta e_2+ \gamma e_3)=\alpha e_2
\]
for $\alpha, \beta, \gamma \in \C$. It is not difficult to see that $\underline{T}$ satisfies the Brehmer's positivity condition and by Theorem \ref{thm_103}, $\underline{T}$ admits a minimal regular $q$-isometric dilation. Suppose $W_1$ acting on a Hilbert space $\KS_1$ is the minimal isometric dilation of $T_1$. Let there exist a contraction $W_2$ on $\KS_1$ such that $(W_1, W_2)$ is doubly $q$-commuting and
$(T_1^*, T_2^*)=(W_1^*|_{\HS}, W_2^*|_{\HS})$. By Theorem \ref{thm_001}, $\underline{T}$ has a minimal $*$-regular $q$-isometric dilation and so, $(T_1^*, T_2^*)$ has a regular $q$-isometric dilation (see Lemma 2.9 and Corollary 2.10 in \cite{N1}). By Theorem \ref{thm_103}, $(T_1^*, T_2^*)$ satisfies Brehmer's positivity condition. Thus, 
$
\la \left(I-T_1T_1^*-T_2T_2^*+(T_1^*T_2^*)^*(T_1^*T_2^*)\right)e_1, e_1 \ra= \la -e_1, e_1\ra \geq 0,
$
which is a contradiction. \qed
\end{eg}

We now present one of the main results of this article providing sufficient conditions for the commutant lifting theorem for $q$-commuting pairs of contractions with $\|q\|=1$, extending earlier work in the commutative case \cite{Gaspar, GasparII}.

\begin{thm}\label{thm_intertw}
	Let $\underline{T}=(T_1, T_2)$ and $\underline{T}'=(T_1', T_2')$ be $q$-commuting pairs of contractions with $\|q\|=1$ acting on Hilbert spaces $\HS$ and $\HS'$ respectively, which admit minimal $*$-regular $q$-isometric dilations. Suppose $\underline{U}=(U_1, U_2)$ acting on a Hilbert space $\mathcal{K}$ and $\underline{U}'=(U_1', U_2')$ acting on a Hilbert space $\mathcal{K}'$ are minimal $*$-regular $q$-isometric dilations of $\underline{T}$ and $\underline{T}'$ respectively. If a contraction $A: \HS \to \HS'$ intertwines $\underline{T}$ and $\underline{T}'$ such that $AT_1^*=T_1'^*A$, then there exists an intertwining contraction $B: \mathcal{K} \to \mathcal{K}'$ for $\underline{U}$ and $\underline{U}'$ such that	$A^*=B^*|_{\HS'}, \|B\|=\|A\|$ and $BU_1^*=U_1'^*B$.
\end{thm}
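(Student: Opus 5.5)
The plan is to build $B$ in two stages: first lift $A$ along the first coordinate only, using the classical commutant lifting theorem, and then check that the lift also intertwines the second coordinates.

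\emph{Stage one: the first coordinate.} By Theorem \ref{thm_104}, $\underline{U}$ and $\underline{U}'$ are doubly $q$-commuting. By Lemma 2.11 of \cite{N1} (as used in the proof of Theorem \ref{thm_001}), $T_i^*=U_i^*|_{\HS}$ and $T_i'^*=U_i'^*|_{\HS'}$.

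Set $\KS_1=\overline{\text{span}}\{U_1^m h: h\in\HS,\ m\in\Z_+\}$ and $\KS_1'=\overline{\text{span}}\{U_1'^m h': h'\in\HS',\ m\in\Z_+\}$. These are the minimal isometric dilation spaces of $T_1$ and $T_1'$, with $W_1=U_1|_{\KS_1}$ and $W_1'=U_1'|_{\KS_1'}$. Put $W_2=P_{\KS_1}U_2|_{\KS_1}$ and $W_2'=P_{\KS_1'}U_2'|_{\KS_1'}$. The proof of Theorem \ref{thm_001} shows that $(W_1,W_2)$ and $(W_1',W_2')$ are doubly $q$-commuting and that $W_i^*|_{\HS}=T_i^*$.

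The hypothesis $AT_1=T_1'A$ and $AT_1^*=T_1'^*A$ says that $A$ doubly intertwines $T_1$ and $T_1'$. The natural candidate is therefore
\[
B_1(W_1^m h)=W_1'^m Ah, \qquad h\in\HS,\ m\in\Z_+.
\]
To see that $B_1$ is well defined and $\|B_1\|=\|A\|$, I would compute inner products. Using $W_1^{*n}W_1^m h=W_1^{m-n}h$ for $m\ge n$, and $W_1^{*(n-m)}h=T_1^{*(n-m)}h$ otherwise, the Gram data of $\{W_1^m h\}$ are expressed through $T_1^{j}$ and $T_1^{*j}$. The double intertwining then transports these data to $\{W_1'^m Ah\}$. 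So $B_1$ extends to a contraction $\KS_1\to\KS_1'$ with $B_1W_1=W_1'B_1$. I would also check $B_1W_1^*=W_1'^*B_1$ on generators, and $B_1^*|_{\HS'}=A^*$ from $P_{\HS'}W_1'^mAh=T_1'^mAh=AT_1^mh$.

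It remains to see that $B_1W_2=W_2'B_1$. Using $q$-commutation, $W_2W_1^m h=q_{21}^{m}W_1^mW_2h$, and the argument in the proof of Theorem \ref{thm_001} gives $W_2h=P_{\KS_1}U_2h$. I would verify the identity by testing against the generators $W_1'^n h'$ and reducing to $AT_2=T_2'A$ together with the adjoint relations $W_2^*|_{\HS}=T_2^*$.

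\emph{Stage two: the second coordinate.} Next I would extend $B_1$ to $\KS=\overline{\text{span}}\{U_1^mU_2^n h\}$ by
\[
B(U_2^n k)=U_2'^n B_1 k, \qquad k\in\KS_1.
\]
This amounts to viewing $\KS$ as the minimal isometric dilation of the contraction $W_2$ on $\KS_1$ (minimality: $U_2$ dilates $W_2$, since $P_{\KS_1}U_2^n|_{\KS_1}=W_2^n$, which needs $\KS_1$ to be co-invariant for $U_2$ in the appropriate sense). With that identification, the same Gram-matrix argument as in stage one applies to the doubly intertwined pair $W_2$, $W_2'$, now using $B_1W_2=W_2'B_1$ and $B_1W_2^*=W_2'^*B_1$.

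\textbf{Main obstacle.} The hard part is the identity $B_1W_2^*=W_2'^*B_1$, and more generally establishing that $U_2$ is the minimal isometric dilation of $W_2$. Nothing is assumed about $A$ versus $T_2^*$, so this must come from the $*$-regularity, that is from the double $q$-commutativity of $U_2$ with $U_1$. The plan is to compute $W_2^*W_1^m h=\overline{q}_{21}^{m}W_1^mT_2^*h$, as in the proof of Theorem \ref{thm_001}. This shows that $W_2^*$ acts on generators only through $T_2^*$ on $\HS$. Applying $B_1$ to both sides then requires $B_1W_1^mT_2^*h=W_1'^mAT_2^*h$ to agree with $W_2'^*W_1'^mAh=\overline{q}_{21}^{m}W_1'^mT_2'^*Ah$, which in turn needs $AT_2^*=T_2'^*A$. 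Since that is not assumed, stage two as written fails as it stands. The fallback is to define $B$ by the Sz.-Nagy–Foias commutant lifting of $B_1$ through $W_2$ and $W_2'$, which needs only $B_1W_2=W_2'B_1$. Then $BU_1^*=U_1'^*B$ is verified separately by doubly $q$-commuting manipulations on the generators $U_2^nU_1^m h$, reducing to $B_1W_1^*=W_1'^*B_1$. Whether this fallback closes the gap is the step where I expect the real difficulty.
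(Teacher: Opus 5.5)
\paragraph{Verdict.} There is a genuine gap: the proposal leaves open exactly the step that carries the proof, namely showing that the lift $B$ also intertwines $U_1$ and $U_1^*$.

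\paragraph{Where the gap is.} Your stage one is the paper's Step 1. Your $B_1$ is exactly the paper's $A_0$; below I write $A_0$ for it, to avoid a clash with the pieces $B_n$ of the lifting. The relations you plan to check ($A_0W_1=W_1'A_0$, $A_0W_1^*=W_1'^*A_0$, $A_0W_2=W_2'A_0$, $A_0^*|_{\HS'}=A^*$) are the ones the paper proves. For the norm bound in your Gram version, note that $A^*A$ commutes with $T_1$ and $T_1^*$. Hence $\|A\|^2G-G'$ is the Gram matrix of $\{W_1^m(\|A\|^2-A^*A)^{1/2}h_m\}$, so that part is fine. Your fallback for stage two is also the paper's route.

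The problem is that you do not say how to obtain $BU_1=U_1'B$ and $BU_1^*=U_1'^*B$, and the reduction you suggest cannot work. The commutant lifting theorem is only an existence statement, and lifts are in general not unique. $B$ is constrained only by $BU_2=U_2'B$, $\|B\|=\|A_0\|$ and $P_{\KS_1'}B=A_0P_{\KS_1}$. Your generator computation, $BU_1^*U_2^nU_1^mh=\overline{q}_{12}^{\,n}U_2'^nBU_1^*U_1^mh$ versus $U_1'^*BU_2^nU_1^mh=\overline{q}_{12}^{\,n}U_2'^nU_1'^*BU_1^mh$, correctly reduces everything to $BU_1^*k_1=U_1'^*Bk_1$ for $k_1\in\KS_1$. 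But for such $k_1$ only the $\KS_1'$-component of $Bk_1$ is known to equal $A_0k_1$. Its component in $\KS'\ominus\KS_1'=\bigoplus_{n\ge0}U_2'^n\overline{(U_2'-W_2')\KS_1'}$ is not determined by $A_0$, so $A_0W_1^*=W_1'^*A_0$ says nothing about it.

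\paragraph{The missing idea.} You need to work with the specific Sz.-Nagy--Foias lift and show that every stage of its construction is $q$-compatible with $U_1$. Write
\[
B=A_0P_{\KS_1}+\sum_{n\ge0}U_2'^nB_n,\qquad B_n=D_nC_n,\qquad B_{-1}=(U_2'-W_2')A_0P_{\KS_1},
\]
with $C_0=(I-A_0^*A_0P_{\KS_1})^{1/2}$, $C_n=(C_0^2-\sum_{j<n}B_j^*B_j)^{1/2}$, $D_n(C_nU_2k)=B_{n-1}k$, and $D_n=0$ on $\KS\ominus\overline{C_nU_2\KS}$. The argument then runs as follows.
\begin{enumerate}
\item Since $\KS_1$ reduces $U_1$ and $A_0$ doubly intertwines $W_1,W_1'$, $C_0$ commutes with $U_1$ and $U_1^*$.
\item The space $\overline{(U_2'-W_2')\KS_1'}$ reduces $U_1'$, and $B_{-1}U_1=q_{21}U_1'B_{-1}$, $B_{-1}U_1^*=\overline{q}_{21}U_1'^*B_{-1}$.
\item Inductively, double $q$-commutativity of $(U_1,U_2)$ makes $\overline{C_nU_2\KS}$ reducing for $U_1$. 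Precisely because $D_n$ is \emph{zero} on the complement, this gives $D_nU_1=q_{12}^nU_1'D_n$ and $D_nU_1^*=\overline{q}_{12}^{\,n}U_1'^*D_n$.
\item Hence $B_nU_1=q_{12}^nU_1'B_n$ and $B_nU_1^*=\overline{q}_{12}^{\,n}U_1'^*B_n$, and $B_n^*B_n$, hence $C_{n+1}$, again commutes with $U_1$ and $U_1^*$.
\item Summing and using $U_2'^nU_1'=q_{21}^nU_1'U_2'^n$ gives $BU_1=U_1'B$ and $BU_1^*=U_1'^*B$.
\end{enumerate}
Without this argument, or some substitute that controls the part of $B$ landing in $\KS'\ominus\KS_1'$, the proof is incomplete.

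\paragraph{A smaller point.} What you list as part of the obstacle, that $U_2$ is the minimal isometric dilation of $W_2$, is routine. The relation $U_2^*U_1^mh=\overline{q}_{21}^{\,m}U_1^mT_2^*h\in\KS_1$ gives $W_2^*=U_2^*|_{\KS_1}$. The relation $U_1^mU_2^nh=q_{12}^{mn}U_2^nU_1^mh$ gives $\KS=\overline{\text{span}}\{U_2^nk_1: k_1\in\KS_1,\ n\in\Z_+\}$.
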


\begin{proof}
The minimality of $\underline{U}$ and $\underline{U}'$ gives that
\[
\KS=\overline{\text{span}}\left\{U_1^mU_2^nx: x \in \HS, m, n \in \Z_+ \right\} \quad \text{and} \quad \KS'=\overline{\text{span}}\left\{U_1'^mU_2'^nx': x' \in \HS', m, n \in \Z_+ \right\}.
\] 
Furthermore, it follows from Theorem \ref{thm_104} that $\underline{U}$ and $\underline{U}'$ are doubly $q$-commuting. For $\KS_1=\overline{\text{span}}\left\{U_1^mx: x \in \HS, m \in \Z_+\right\}$ and $\KS_1'=\overline{\text{span}}\left\{U_1'^m x': x' \in \HS', m \in \Z_+\right\}$, we define
\[
(S_1, S_2)=(U_1|_{\KS_1}, P_{\KS_1}U_2|_{\KS_1}) \quad \text{and} \quad (S_1', S_2')=(U_1'|_{\KS'_1}, P_{\KS'_1}U_2'|_{\KS'_1}).
\]
Evidently, $S_1$ and $S_1'$ are the minimal isometric dilations of $T_1$ and $T_1'$ respectively. It follows from the proof of Theorem \ref{thm_001} that $(S_1, S_2)$ and $(S_1', S_2')$ are doubly $q$-commuting pair of contractions and the following holds:
\begin{align}\label{eqn_203}
(S_1^*|_{\HS}, S_2^*|_{\HS})=(T_1^*, T_2^*)=(U_1^*|_{\HS}, U_2^*|_{\HS}) \ \ \text{and} \ \  (S_1'^*|_{\HS'}, S_2'^*|_{\HS'})=(T_1'^*, T_2'^*)=(U_1'^*|_{\HS'}, U_2'^*|_{\HS'}),
\end{align}
where both the last equalities follow from Lemma 2.11 in \cite{N1}. From here onwards, the proof is divided into several steps for better understanding.

\medskip 

\noindent \textbf{\textit{Step 1.}} In this step, we construct a contraction $A_0$ between the minimal isometric dilation spaces of $T_1$ and $T_1'$, and show that $A_0$ intertwines the corresponding isometric dilations. The minimal isometric dilations spaces can be written (see \cite{NagyBook}, Chapter II ) as follows:  
\[
\KS_1=\HS \oplus \bigoplus_{p \in \Z_+}S_1^p \ \overline{(S_1-T_1)\HS} \quad \text{and} \quad \KS_1'=\HS' \oplus \bigoplus_{p \in \Z_+}S_1'^p \ \overline{(S_1'-T_1')\HS'}.
\]
Let $D_{T_1}=I-T_1^*T_1$ and $D_{T_1'}=I-T_1'^*T_1'$. By the intertwining property of $A$, we have that $D_{T_1'}^2A=AD_{T_1}^2$ and so, $D_{T_1'}A=AD_{T_1}$. For $x \in \HS$ and $p \geq 0$, it now follows that
\begin{align*}
	\|S_1^p(S_1-T_1)x\|^2=\|(S_1-T_1)x\|^2
	&=\la (S_1-T_1)x, (S_1-T_1)x \ra \\
	&=\|x\|^2-\la x, S_1^*T_1x\ra-\la S_1^*T_1x, x\ra+\|T_1x\|^2\quad [\text{since $S_1$ is an isometry}]\\
	&= \|x\|^2-\la x, T_1^*T_1x\ra-\la T_1^*T_1x, x\ra+\|T_1x\|^2 \quad [\text{as $S_1^*=T_1^*|_{\HS}$}]\\
	&=\|x\|^2-\|T_1x\|^2\\
	&=\|D_{T_1}x\|^2.
\end{align*}
Similarly, one can show that $\|S_1'^p(S_1'-T_1')Ax\|^2=\|(S_1'-T_1')Ax\|^2=\|D_{T_1'}Ax\|^2=\|AD_{T_1x}\|^2$, where the last equality holds since $D_{T_1'}A=AD_{T_1}$. Consequently, we have
\begin{align}\label{eqn_204}
\|S_1'^p(S_1'-T_1')Ax\|^2=\|AD_{T_1x}\|^2 \leq \|D_{T_1}x\|^2=\|S_1^p(S_1-T_1)x\|^2.
\end{align}
Consider the operator $A_0: \KS_1 \to \KS_1'$ given by
\[
A_0\left(x+ \sum_{p \in \Z_+}S_1^p(S_1-T_1)x_p\right)=Ax+ \sum_{p \in \Z_+}S_1'^p(S_1'-T_1')Ax_p.
\] 
It follows from \eqref{eqn_204} that  $A_0$ is a well-defined contraction. Evidently, $A_0|_{\HS}=A$. Moreover, $\|A_0\|=\|A\|$. To see this, let $\{x, x_p : p \in \Z_+\} \subset \HS$. Then we have by \eqref{eqn_204} that
\begin{align*}
\left\|A_0\left(x+ \sum_{p \in \Z_+}S_1^p(S_1-T_1)x_p\right)\right\|^2
&=\left\|Ax\right\|^2+\sum_{p \in \Z_+}\left\|S_1'^p(S_1'-T_1')Ax_p\right\|^2\\
&=\left\|Ax\right\|^2+\sum_{p \in \Z_+}\left\|AD_{T_1}x_p\right\|^2\\
& \leq \|A\| \left(\|x\|^2+\sum_{p \in \Z_+}\left\|D_{T_1}x_p\right\|^2\right)\\
&=\|A\| \left\|x+ \sum_{p \in \Z_+}S_1^p(S_1-T_1)x_p\right\|^2.
\end{align*}
We show that $A_0$ intertwines $\underline{S}$ and $\underline{S}'$, and $A_0S_1^*=S_1'^*A_0$. For $p \in \Z_+$ and $x, x_p \in \HS$, note that
\begin{align*}
A_0S_1\left(x+ S_1^p(S_1-T_1)x_p\right)
&=A_0\left(T_1x+ (S_1-T_1)x+ S_1^{p+1}(S_1-T_1)x_p\right)\\
&=AT_1x+(S_1'-T_1')Ax+S_1'^{p+1}(S_1'-T_1')Ax_p\\
&=T_1'Ax+(S_1'-T_1')Ax+S_1'^{p+1}(S_1'-T_1')Ax_p \quad [\text{as $AT_1=T_1'A$}]\\
&=S_1'Ax+S_1'^{p+1}(S_1'-T_1')Ax_p\\
&=S_1'A_0\left(x+ S_1^p(S_1-T_1)x_p\right).
\end{align*} 
Let $m \in \N$ and $x, x_0, x_m \in \HS$. Since $A_0|_\HS=A, S_1^*|_\HS=T_1^*$ and $S_1$ is an isometry, we have
\begin{align*}
A_0S_1^*\left(x+ (S_1-T_1)x_0+S_1^m(S_1-T_1)x_m\right)
&=A_0T_1^*x+A_0x_0-A_0T_1^*T_1x_0+A_0S_1^{m-1}(S_1-T_1)x_m\\
&=AT_1^*x+Ax_0-AT_1^*T_1x_0+A_0S_1^{m-1}(S_1-T_1)x_m\\
&=T_1'^*Ax+Ax_0-T_1'^*T_1'Ax_0+S_1'^{(m-1)}(S_1'-T_1')Ax_m\\
& \qquad [\text{since $ AT_1^*=T_1'^*A, \ AT_1=T_1'A, \ A_0S_1=S_1'A_0$}]\\
&=S_1'^*Ax+S_1'^*(S_1'-T_1')Ax_0+S_1'^*S_1'^m(S_1'-T_1')Ax_m\\
&=S_1'^*A_0\left(x+ (S_1-T_1)x_0+S_1^m(S_1-T_1)x_m\right).
\end{align*}
Next, we show that $A_0^*|_{\HS'}=A^*$. For $x' \in \HS'$ and $x, x_p \in \HS$ with $p \in \Z_+$, we have
\begin{align*}
	\la A_0^*x', x+ S_1^p(S_1-T_1)x_p \ra 
	&= \la x', A_0 x\ra + \la  x', A_0S_1^p(S_1-T_1)x_p\ra \\
	&=\la x', Ax\ra + \la  x', S_1'^p(S_1'-T_1')Ax_p\ra  \quad [\text{as $A_0|_{\HS}=A$}]\\
	&=\la x', Ax\ra + \la  A^*(S_1'^*-T_1'^*)S_1'^{*p}x',x_p\ra\\
	&=\la x', Ax\ra + \la  (S_1^*-T_1^*)S_1^{*p}A^*x',x_p\ra
	\quad [\text{since $S_1'^*|_{\HS'}=T_1'^*$ and $AT_1=T_1'A$}]\\
	&=\la A^*x', x+ S_1^p(S_1-T_1)x_p \ra.
\end{align*}
We prove that $A_0S_2=S_2'A_0$. Let $x', x_p' \in \HS'$ and $p \in \Z_+$. Since $(A_0^*|_{\HS'}, S_2'^*|_{\HS'})=(A^*, T_2'^*)$, it follows that 
$
A_0^*S_2'^*x'=A^*T_2'^*x'=T_2^*A^*x'=S_2^*A_0^*x'.
$
Since $(S_1', S_2')$ is doubly $q$-commuting, we have
\begin{align*}
	A_0^*S_2'^*\left(S_1'^pS_1'x_p'\right)
	&=\overline{q}_{21}^{p+1}A_0^*S_1'^{(p+1)}S_2'^*x_p'\\
	&=\overline{q}_{21}^{p+1}A_0^*S_1'^{(p+1)}T_2'^*x_p'\\
&=\overline{q}_{21}^{p+1}S_1^{(p+1)}S_2^*A^*x_p' \quad [\text{as $S_1'^*A_0=A_0S_1^*, A_0^*|_{\HS'}=A^*$ and $T_2'A=AT_2$}]\\
&=S_2^*S_1^{(p+1)}A_0^*x_p' \quad [\text{since $A_0|_{\HS'}=A$ and $(S_1', S_2')$ is a doubly $q$-commuting pair}]\\
&=S_2^*A_0^*S_1'^{(p+1)}x_p',\quad  \text{and} \\
	A_0^*S_2'^*\left(S_1'^pT_1'x_p'\right)
	&=\overline{q}_{21}^{p}A_0^*S_1'^{p}S_2'^*T_1'x_p'\\
	&=\overline{q}_{21}^{p}S_1^{p}A_0^*T_2'^*T_1'x_p' \quad [\text{since $S_2'^*|_{\HS'}=T_2'^*$ and $A_0S_1^*=S_1'^*A_0$}]\\
	&=\overline{q}_{21}^{p}S_1^{p}T_2^*A^*T_1'x_p' \quad [\text{as $A_0^*|_{\HS'}=A^*$ and $AT_2=T_2'A$}]\\
	&=\overline{q}_{21}^{p}S_1^{p}S_2^*A^*T_1'x_p' \\
	&=S_2^*S_1^{p}A_0^*T_1'x_p' \quad [\text{as $A_0^*|_{\HS'}=A^*$ and $(S_1', S_2')$ is a doubly $q$-commuting pair}]\\
		&=S_2^*A_0^*S_1'^{p}T_1'x_p'.
\end{align*}
Consequently, $S_2^*A_0^*=A_0^*S_2'^*$ and so, $A_0S_2=S_2'A_0$. Putting everything together, we have that
\begin{align}\label{eqn_step2}
A_0S_1=S_1'A_0, \ \ A_0S_1^*=S_1'^*A_0, \ \  A_0S_2=S_2'A_0, \ \ (A_0|_{\HS}, A_0^*|_{\HS'})=(A, A^*) \ \ \text{and} \ \ \|A_0\|=\|A\|.
\end{align}

\medskip 

\noindent \textbf{\textit{Step 2.}} In this step, we prove the existence of an operator $B : \KS \to \KS'$ with $\|B\|=\|A\|$ such that $B$ intertwines $U_2$ and $U_2'$, and $A^*=B^*|_{\HS'}$. To begin with, it follows from \eqref{eqn_step2} that $AP_{\HS}=P_{\HS'}A_0$. For $x \in \HS$ and $m \in \Z_+$,  we have by \eqref{eqn_203} that 
\begin{align*}
S_2^*U_1^mx=P_{\KS_1}U_2^*U_1^mx
=\ov{q}_{21}^mP_{\KS_1}U_1^mU_2^*x
=\ov{q}_{21}^mP_{\KS_1}U_1^mT_2^*x 
=\ov{q}_{21}^mU_1^mT_2^*x
=\ov{q}_{21}^mU_1^mU_2^*x
=U_2^*U_1^mx
\end{align*}
and so, $S_2^*=U_2^*|_{\KS_1}$. It follows from the definitions of $\KS_1$ and $\KS$ that  \[
\KS=\ov{\text{span}}\left\{U_1^mU_2^nx: x \in \HS, m, n \in \Z_+ \right\}=\ov{\text{span}}\left\{U_2^nk_1: k_1 \in \KS_1, n \in \Z_+\right\}.
\] 
Therefore, $U_2$ on $\KS$ is the minimal isometric dilation of $S_2$, and similarly one can show that $U_2'$ on $\KS'$ is the minimal isometric dilation of $S_2'$. For $x, x_m \in \HS$ with $m \in \N$, it follows from \eqref{eqn_203} that
\begin{align*}
	S_1^*\left(x+U_1^mx_m\right)
	=P_{\KS_1}U_1^*\left(x+U_1^mx_m\right)
	=P_{\KS_1}T_1^*x+P_{\KS_1}U_1^{m-1}x_m
	=T_1^*x+U_1^{m-1}x_m
	=U_1^*(x+U_1^mx_m)
	\end{align*} 
and so, $S_1^*=U_1^*|_{\KS_1}$. Similarly, it follows that $S_1'^*=U_1'^*|_{\KS_1'}$. Since $U_2'$ on $\KS'$ is the minimal isometric dilation of $S_2'$, we can re-write the structure of $\KS'$ as follows: 
\[
\KS'=\KS_1' \oplus \bigoplus_{n \in \Z_+}U_2'^n\ov{(U_2'-S_2')\KS'_1}.
\]
Putting everything together, $S_2$ on $\KS_1$ and $S_2'$ on $\KS_1'$ are contractions, and $U_2$ on $\KS$ and $U_2'$ on $\KS'$ are their minimal isometric dilations respectively. Also, the operator $A_0: \KS_1 \to \KS_1'$ intertwines $S_2$ and $S_2'$. It follows from Theorem 2.3 of Chapter II in \cite{NagyBook} that there exists an operator $B : \KS \to \KS'$ such that $\|B\|=\|A_0\|, BU_2=U_2'B$ and $A_0^*=B^*|_{\KS_1'}$. By \eqref{eqn_step2}, $\|B\|=\|A\|$ and $A^*=B^*|_{\HS'}$. 

\medskip 

\noindent \textbf{\textit{Step 3.}} We now explicitly recall the construction of the operator $B$ from Section 2 of Chapter II of \cite{NagyBook}, with further related discussion in Section 3 of \cite{GasparII}. The operator $B:\KS\to\KS'$ is defined as
\[
Bk=A_0P_{\KS_1}k+\overset{\infty}{\underset{n=0}{\sum}}U_2'^nB_nk \quad \text{($k \in \KS$)}.
\]
Here, the contractions $B_n: \KS \to \overline{(U_2'-S_2')\KS_1'}$ are inductively defined as $B_n=D_nC_n$ with 
\begin{align*}
& C_n: \KS \to \KS, \quad C_0=(I_\KS-A_0^*A_0P_{\KS_1})^{1\slash 2} \quad \text{and} \quad  C_n=(C_0^2-B_0^*B_0-\dotsc-B_{n-1}^*B_{n-1})^{1\slash 2},
\quad \text{and} \\
& D_n: \KS \to  \overline{(U_2'-S_2')\KS_1'},  \quad D_n(C_nU_2k)=B_{n-1}k \ \ (k \in \KS) \quad  \text{and} \quad D_n=0 \ \text{on} \ \KS \ominus \ov{C_nU_2\KS},
\end{align*} 
where $B_{-1}=(U_2'-S_2')A_0P_{\KS_1}$. It is not difficult to see that $B(\KS \ominus \HS) \subseteq \KS' \ominus \HS', P_{\HS'}B=AP_{\HS}$ and $\|B\|=\|A\|$. Since $B_nU_2=B_{n-1}$ for $n \in \N$, it follows that $BU_2=U_2'B$. 

\medskip 

\noindent \textbf{\textit{Step 4.}} Finally, we establish that $B$ is the desired operator and it only remains to show that $BU_1=U_1'B$ and $BU_1^*=U_1'^*B$. To do so, we consider the intertwining relations of $B$ with $U_n$ and $U_n'$, and with $U_n^*$ and $U_n'^*$ for each $n$. Since $S_1=U_1|_{\KS_1}, S_1^*=U_1^*|_{\KS_1}, A_0S_1=S_1'A_0$ and $A_0S_1^*=S_1'^*A_0$, it follows that $C_0U_1=U_1C_0$ and $C_0U_1^*=U_1^*C_0$. Using $S_1'=U_1'|_{\KS_1'}$ and $S_1'^*=U_1'^*|_{\KS_1'}$, we have 
\begin{align*}
U_1'(U_2'-S_2')k_1'
&=U_1'U_2'k_1'-U_1'S_2'k_1'=q_{12}U_2'U_1'k_1'-S_1'S_2'k_1'=q_{12}(U_2'-S_2')S_1'k_1'\quad \text{and} \\
U_1'^*(U_2'-S_2')k_1'
&=U_1'^*U_2'k_1'-U_1'^*S_2'k_1'=\ov{q}_{12}U_2'U_1'^*k_1'-S_1'^*S_2'k_1'=\ov{q}_{12}(U_2'-S_2')S_1'^*k_1'
\end{align*}
for all $k_1' \in \KS'_1$. Consequently, $U_1'\ov{(U_2'-S_2')\KS'_1} \subseteq \ov{(U_2'-S_2')\KS'_1}, \ U_1'^*\ov{(U_2'-S_2')\KS'_1} \subseteq \ov{(U_2'-S_2')\KS'_1}$ and so, the subspace $\ov{(U_2'-S_2')\KS'_1}$ is a reducing subspace for $U_1'$. Then
\begin{align*}
	B_{-1}U_1
	=(U_2'-S_2')A_0P_{\KS_1}U_1
	&=(U_2'-S_2')A_0S_1P_{\KS_1} \quad [\text{since $S_1^*=U_1^*|_{\KS_1}$}]\\
	&=(U_2'-S_2')S_1'A_0P_{\KS_1} \quad [\text{as $A_0S_1=S_1'A_0$}]\\
	&=U_2'U_1'A_0P_{\KS_1}-S_2'S_1'A_0P_{\KS_1} \quad [\text{because $S_1'=U_1'|_{\KS_1'}$}]\\
	&=q_{21}(U_1'U_2'-S_1'S_2')A_0P_{\KS_1}\\
	&=q_{21}U_1'(U_2'-S_2')A_0P_{\KS_1} \quad [\text{as $S_1'=U_1'|_{\KS_1'}$}]\\
	&=q_{21}U_1'B_{-1}
\end{align*}
and
\begin{align*}
	B_{-1}U_1^*
	=(U_2'-S_2')A_0P_{\KS_1}U_1^*
	&=(U_2'-S_2')A_0S_1^*P_{\KS_1} \quad [\text{since $S_1=U_1|_{\KS_1}$}]\\
	&=(U_2'-S_2')S_1'^*A_0P_{\KS_1} \quad [\text{as $A_0S_1^*=S_1'^*A_0$}]\\
	&=U_2'U_1'^*A_0P_{\KS_1}-S_2'S_1'^*A_0P_{\KS_1} \quad [\text{because $S_1'^*=U_1'^*|_{\KS_1'}$}]\\
	&=\ov{q}_{21}(U_1'^*U_2'-S_1'^*S_2')A_0P_{\KS_1}\\
	&=\ov{q}_{21}U_1'^*(U_2'-S_2')A_0P_{\KS_1} \quad [\text{as $S_1'^*=U_1'^*|_{\KS_1'}$}]\\
	&=\ov{q}_{21}U_1'^*B_{-1}.
\end{align*}

Next, using $U_1U_2=q_{12}U_2U_1, U_1U_2^*=\overline{q}_{12}U_2^*U_1, C_0U_1=U_1C_0$ and $C_0U_1^*=U_1^*C_0$, we have
\begin{align*}
U_1\ov{C_0U_2\KS}
&\subseteq \ov{U_1C_0U_2\KS}=\ov{C_0U_1U_2\KS} \subseteq \ov{C_0U_2U_1\KS} \subseteq \ov{C_0U_2\KS} \quad \text{and} \\
U_1^*\ov{C_0U_2\KS}
&\subseteq \ov{U_1^*C_0U_2\KS}=\ov{C_0U_1^*U_2\KS} \subseteq \ov{C_0U_2U_1^*\KS} \subseteq \ov{C_0U_2\KS}.
\end{align*}
Therefore, $\ov{C_0U_2\KS}$ is a reducing subspace for $U_1$ and thus, $U_1$ doubly commutes with $P_0$, where $P_0$ is the orthogonal projection of $\KS$ onto $\ov{C_0U_2\KS}$. Then
\[
D_0U_1(I-P_0)=D_0(I-P_0)U_1=0=U_1'D_0(I-P_0) \  \ \text{and similarly, } \ \ D_0U_1^*(I-P_0)=0=U_1'^*D_0(I-P_0) .
\]
So, $D_0U_1=U_1'D_0$ and $D_0U_1^*=U_1'^*D_0$ on $\KS \ominus \ov{C_0U_2\KS}$. For $k \in \KS$, it follows that 
\begin{align*}
	D_0U_1(C_0U_2k)
	&=D_0C_0U_1U_2k \quad [\text{as $C_0U_1=U_1C_0$}]\\
	&=q_{12}D_0C_0U_2U_1k\\
	&=q_{12}B_{-1}U_1k\\
	&=U_1'B_{-1}k \quad [\text{since $B_{-1}U_1=q_{21}U_1'B_{-1}$}]\\
	&=U_1'D_0C_0U_2k
\end{align*}
and 
\begin{align*}
	D_0U_1^*(C_0U_2k)
	&=D_0C_0U_1^*U_2k \quad [\text{as $C_0U_1^*=U_1^*C_0$}]\\
	&=\ov{q}_{12}D_0C_0U_2U_1^*k\\
	&=\ov{q}_{12}B_{-1}U_1^*k\\
	&=U_1'^*B_{-1}k \quad [\text{since $B_{-1}U_1^*=\ov{q}_{21}U_1'^*B_{-1}$}]\\
	&=U_1'^*D_0C_0U_2k.
\end{align*}
Thus, $D_0U_1=U_1'D_0$ and $D_0U_1^*=U_1'^*D_0$ on $\KS$. It now follows that
\[
B_0U_1=D_0C_0U_1=D_0U_1C_0=U_1'D_0C_0=U_1'B_0 \quad \text{and} \quad  B_0U_1^*=D_0C_0U_1^*=D_0U_1^*C_0=U_1'^*B_0.
\]
Note that $B_{-1}U_1=q_{12}^{-1}U_1'B_{-1}, B_{-1}U_1^*=\ov{q}_{12}^{-1}U_1'^*B_{-1}, B_0U_1=U_1'B_0$ and $B_0U_1^*=U_1'^*B_0$. Following the similar techniques as above, one can easily show that 
\begin{align}\label{eqn_205}
B_nU_1=(q_{12})^nU_1'B_n \quad \text{and} \quad B_nU_1^*=(\ov{q}_{12})^nU_1'^*B_n
\end{align}
for $n \geq -1$. Since $P_{\KS_1}U_1=S_1P_{\KS_1}$ and $P_{\KS_1}U_1^*=S_1^*P_{\KS_1}$, we have that
\begin{align*}
	BU_1
	&=A_0P_{\KS_1}U_1+\overset{\infty}{\underset{n=0}{\sum}}U_2'^nB_nU_1\\
	&=A_0S_1P_{\KS_1}+\overset{\infty}{\underset{n=0}{\sum}}(q_{12})^nU_2'^nU_1'B_n \quad [\text{by \eqref{eqn_205}}]\\
	&=S_1'A_0P_{\KS_1}+\overset{\infty}{\underset{n=0}{\sum}}(q_{12})^n (q_{21})^nU_1'U_2'^nB_n \quad [\text{since $(U_1', U_2')$ is doubly $q$-commuting}]\\
	&=U_1'A_0P_{\KS_1}+\overset{\infty}{\underset{n=0}{\sum}}U_1'U_2'^nB_n  \quad [\text{as $S_1'=U_1'|_{\KS_1'}$}]\\
	&=U_1'B
\end{align*}
and similarly, it follows that $BU_1^*=U_1'^*B$. The proof is now complete.
\end{proof}

As an application of Theorem \ref{thm_intertw}, we now present sufficient conditions that ensure a $q$-isometric dilation for $q$-commuting triples of contractions with $\|q\|=1$. Before this, we put forth a basic lemma whose proof follows directly from Theorem \ref{thm_103}. 

\begin{lem}\label{lem_205}
	Let $(V_1, V_2, V_3)$ be a $q$-commuting triple of contractions with $\|q\|=1$ on a Hilbert space $\mathcal{H}$ such that $(V_1, V_2)$ is doubly $q_{12}$-commuting,
	 $(V_1, V_3)$ is doubly $q_{13}$-commuting and $V_2$ is an isometry. Then $(V_1, V_2, V_3)$ admits a minimal regular $q$-isometric dilation.	
\end{lem}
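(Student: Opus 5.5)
By Theorem \ref{thm_103}, it suffices to verify Brehmer's positivity condition for $(V_1,V_2,V_3)$. Concretely, we must show $\Delta(u)\ge 0$ for every $u\subseteq\{1,2,3\}$. A regular $q$-unitary dilation then compresses to a regular $q$-isometric dilation, and restricting to the smallest joint invariant subspace generated by $\mathcal H$ yields a minimal one. The plan is to factor each $\Delta(u)$ as a product of positive operators, exploiting the double $q$-commutativity of the pairs involving $V_1$ and the fact that $V_2$ is an isometry.

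First, the subsets containing $2$. Since $V_2^*V_2=I$, and the unimodular scalars from reordering cancel in terms of the form $(T_\alpha)^*(T_\alpha)$, the factor $I-V_2^*V_2=0$ appears. For instance,
\[
\Delta(\{2\})=0,\qquad \Delta(\{1,2\})=I-V_1^*V_1-I+V_2^*V_1^*V_1V_2 .
\]
Using $V_1V_2=q_{12}V_2V_1$ and double $q$-commutativity, $V_2^*V_1^*V_1V_2=V_1^*V_2^*V_2V_1=V_1^*V_1$, so $\Delta(\{1,2\})=0$. In the same way $\Delta(\{2,3\})$ and $\Delta(\{1,2,3\})$ should vanish. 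For these, write $V_3V_2$ so that $V_2$ sits innermost, e.g. $(V_2V_3)^*(V_2V_3)=(V_3V_2)^*(V_3V_2)=V_2^*V_3^*V_3V_2$. This needs care: $V_2^*V_3^*V_3V_2$ versus $V_3^*V_3$ requires moving $V_2$ past $V_3^*V_3$, which is not given. Instead, order the product as $V_2V_3$ with $V_2$ outermost, so that $(V_2V_3)^*(V_2V_3)=V_3^*V_2^*V_2V_3=V_3^*V_3$. The $q$-commutation makes the two orderings differ only by a unimodular scalar, which cancels in $X^*X$. Then
\[
\Delta(\{2,3\})=I-V_2^*V_2-V_3^*V_3+V_3^*V_3=0 .
\]
Similarly, for $\{1,2,3\}$, write every product containing $2$ with $V_2$ leftmost; each such term equals the corresponding term without $2$, and the alternating sum cancels to $0$.

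Next, the subsets not containing $2$. We have $\Delta(\{i\})=I-V_i^*V_i\ge0$ by contractivity. For $\{1,3\}$, double $q_{13}$-commutativity gives $(V_1V_3)^*(V_1V_3)=V_3^*V_1^*V_1V_3=V_1^*V_3^*V_3V_1$, and moreover $V_1^*V_1$ commutes with $V_3^*V_3$. Hence
\[
\Delta(\{1,3\})=(I-V_1^*V_1)(I-V_3^*V_3),
\]
a product of commuting positive operators, and therefore positive. The main obstacle is justifying this commutation cleanly: one checks that $V_1^*V_1V_3^*V_3=V_3^*V_1^*V_1V_3$, using $V_1V_3^*=\overline{q}_{13}V_3^*V_1$ together with its adjoint, so that the scalars cancel.

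With all $\Delta(u)\ge0$ established, Theorem \ref{thm_103} gives a regular $q$-unitary dilation, whose compression to the minimal invariant subspace generated by $\mathcal H$ is the desired minimal regular $q$-isometric dilation.
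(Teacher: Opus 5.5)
Your proposal is correct and follows the same route as the paper: check Brehmer's positivity condition and invoke Theorem \ref{thm_103}. Every $\Delta(u)$ with $2\in u$ vanishes because $V_2$ is an isometry, and $\Delta(\{1,3\})=(I-V_1^*V_1)(I-V_3^*V_3)\ge 0$ by double $q_{13}$-commutativity; the paper only asserts this check is ``not difficult,'' and your computation, together with restricting to the joint invariant subspace generated by $\mathcal H$ for minimality, supplies the omitted details.
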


\begin{proof}
	It is not difficult to see that $(V_1, V_2, V_3)$ satisfies the Brehmer's positivity condition. The desired conclusion now follows directly from Theorem \ref{thm_103}. 
\end{proof}

The following theorem shows that a $q$-commuting triple of contractions with $\|q\|=1$ admits a $q$-isometric dilation, provided that two of the three pairs admit $*$-regular $q$-isometric dilation. 

\begin{thm}\label{thm_suff}
	Let $(T_1, T_2, T_3)$ be a $q$-commuting triple of contractions with $\|q\|=1$ on a Hilbert space $\HS$. If the pairs $(T_1, T_2)$ and $(T_1, T_3)$ admit minimal $*$-regular $q_{12}$-isometric and $q_{13}$-isometric dilations respectively, then $(T_1, T_2, T_3)$ has a $q$-isometric dilation.
\end{thm}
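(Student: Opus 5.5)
We use Theorem \ref{thm_001} and Theorem \ref{thm_intertw} to replace $T_1$ by its minimal isometric dilation, and then invoke Lemma \ref{lem_205}. Let $W_1$ on $\KS_1$ be the minimal isometric dilation of $T_1$. Applying Theorem \ref{thm_001} to $(T_1,T_2)$ and to $(T_1,T_3)$ gives contractions $W_2, W_3$ on the same space $\KS_1$ with the following properties: $(W_1,W_2)$ is doubly $q_{12}$-commuting, $(W_1,W_3)$ is doubly $q_{13}$-commuting, and $W_i^*|_{\HS}=T_i^*$ for $i=1,2,3$. This works because $\KS_1$ and $W_1$ are determined by $T_1$ alone. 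In particular $\HS$ is jointly invariant for $W_1^*,W_2^*,W_3^*$, and the compressions recover the $T_i$.

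We do not know that $W_2W_3=q_{23}W_3W_2$, so we dilate once more. Let $U_2$ on $\KS$ be the minimal isometric dilation of $W_2$. By the proof of Theorem \ref{thm_intertw}, Steps 2--3, the pair $(W_1,W_2)$ is covered: it comes from the minimal $*$-regular dilation $(U_1,U_2)$ of $(T_1,T_2)$, with $W_1=U_1|_{\KS_1}$ and $W_1^*=U_1^*|_{\KS_1}$. Moreover $U_2$ is the minimal isometric dilation of $W_2$, and $(U_1,U_2)$ is doubly $q_{12}$-commuting. We then lift $W_3$ to an operator $V_3$ on $\KS$ such that:
\begin{itemize}
\item $V_3$ is $q_{23}$-commuting with $U_2$;
\item $V_3$ doubly $q_{13}$-commutes with $U_1$;
\item $V_3^*|_{\KS_1}=W_3^*$.
\end{itemize}
The natural tool is Theorem \ref{thm_intertw} with twisted intertwining. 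We apply its Step 2/3 construction, the Sz.-Nagy--Foias commutant lifting of $A_0:=W_3$, which ``intertwines'' $W_2$ with itself up to the scalar $q_{32}$. Precisely, $W_3W_2=q_{32}W_2W_3$ holds after compression to $\HS$, but we must check it on $\KS_1$. To handle the twist, we replace $U_2'$ by $q_{32}U_2$, which is still an isometric dilation of $q_{32}W_2$. Step 4 of Theorem \ref{thm_intertw} then shows the lift $B=:V_3$ satisfies $V_3U_1=q_{31}U_1V_3$ and the starred relation, because $(U_1,U_2)$ doubly $q_{12}$-commutes. In that step the scalar bookkeeping in \eqref{eqn_205} must be redone with the extra $q_{13}$ factors.

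Once $V_3$ is constructed, $(U_1,U_2,V_3)$ is a $q$-commuting triple on $\KS$ in which:
\begin{itemize}
\item $(U_1,U_2)$ is doubly $q_{12}$-commuting;
\item $(U_1,V_3)$ is doubly $q_{13}$-commuting;
\item $U_2$ is an isometry.
\end{itemize}
Lemma \ref{lem_205} (with the roles of indices arranged as stated there) then gives a $q$-isometric dilation $(X_1,X_2,X_3)$ on some $\mathcal L\supseteq\KS$. Since $\HS\subseteq\KS_1\subseteq\KS$ is co-invariant under each stage, compressions compose. Hence for monomials,
\[
P_\HS X_1^{m_1}X_2^{m_2}X_3^{m_3}|_\HS=P_\HS U_1^{m_1}U_2^{m_2}V_3^{m_3}|_\HS=T_1^{m_1}T_2^{m_2}T_3^{m_3}.
\]
The second equality follows because each $U^*$ or $V_3^*$ restricted to $\HS$ is $T^*$, and $\HS$ is jointly invariant for the adjoints.

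The main obstacle is the middle step: the twisted intertwining $W_3W_2=q_{32}W_2W_3$ on all of $\KS_1$, not only on $\HS$. I would try to verify it on the spanning vectors $W_1^m x$ using double $q$-commutativity of $W_1$ with $W_2$ and with $W_3$. One moves $W_1^m$ out from both sides, picking up $q_{12}^m q_{13}^m$ factors, and reduces to $W_3W_2x$ versus $W_2W_3x$ for $x\in\HS$. That comparison is not automatic, since only the adjoints restrict nicely. If it fails, the fallback is to apply Theorem \ref{thm_intertw} directly with $A=T_3$ viewed as intertwining $(T_1,T_2)$ with $(T_1,q_{32}T_2)$; the hypothesis $AT_1^*=T_1^*A$ would then need modification to the $q$-twisted version supplied by double $q_{13}$-commutation of $(W_1,W_3)$.
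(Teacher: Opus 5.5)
You have the paper's architecture: lift $T_2,T_3$ to $W_2,W_3$ on the common space $\KS_1$ via Theorem \ref{thm_001}, lift $W_3$ through Theorem \ref{thm_intertw} with scalar twists, apply Lemma \ref{lem_205}, and compress using co-invariance of $\HS$. But the step everything rests on, $W_3W_2=q_{32}W_2W_3$ on all of $\KS_1$, is left open, and neither of your routes closes it.

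Your first route pushes $W_1^m$ through $W_2$ and $W_3$. That only reduces the problem to comparing $W_3W_2x$ with $W_2W_3x$ for $x\in\HS$. These vectors leave $\HS$, and you know nothing about them beyond their compressions.

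Your fallback needs $T_3T_1^*=\overline{q}_{31}T_1^*T_3$ on $\HS$, and this is not among the hypotheses. By Theorem \ref{thm_104}, a $*$-regular dilation alone does not make $(T_1,T_3)$ doubly $q_{13}$-commuting. Nor does the double $q_{13}$-commutation of $(W_1,W_3)$ pass down to $\HS$, since $\HS$ is invariant only under the adjoints.

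The step also cannot be avoided. It is exactly the hypothesis $AW_2=T_2'A$ of Theorem \ref{thm_intertw}. Moreover, any lift $V_3$ with $V_3U_2=q_{32}U_2V_3$ and $V_3^*|_{\KS_1}=W_3^*$ forces it: restrict the adjoint relation to $\KS_1$, using $U_2^*|_{\KS_1}=W_2^*$.

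The missing idea is to run your spanning-vector argument on the adjoints, where restriction to $\HS$ does work; this is what the paper does. You have $W_j^*W_1=\overline{q}_{j1}W_1W_j^*$ for $j=2,3$, $W_j^*\HS\subseteq\HS$, and $W_j^*|_{\HS}=T_j^*$. Hence for $x\in\HS$ and $m\in\mathbb{Z}_+$,
\[
W_2^*W_3^*W_1^mx=\overline{q}_{21}^{\,m}\overline{q}_{31}^{\,m}W_1^mT_2^*T_3^*x=q_{23}\,\overline{q}_{21}^{\,m}\overline{q}_{31}^{\,m}W_1^mT_3^*T_2^*x=q_{23}W_3^*W_2^*W_1^mx.
\]
Density of these vectors in $\KS_1$ gives $W_2^*W_3^*=q_{23}W_3^*W_2^*$, that is, $W_2W_3=q_{23}W_3W_2$ on $\KS_1$.

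After this you need not redo the bookkeeping in \eqref{eqn_205}. Take $A=W_3$, $(T_1',T_2')=(q_{31}W_1,q_{32}W_2)$ and $(U_1',U_2')=(q_{31}U_1,q_{32}U_2)$. Then $(T_1',T_2')$ is still $q_{12}$-commuting, with minimal $*$-regular dilation $(U_1',U_2')$. The relations $AW_1=T_1'A$, $AW_2=T_2'A$ and $AW_1^*=T_1'^*A$ all hold, so Theorem \ref{thm_intertw} applies verbatim.

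Twisting $U_1'$ as well as $U_2'$ is what makes this black-box use legitimate. If you twist only $U_2'$, you would be requiring $W_3W_1=W_1W_3$, which fails unless $q_{13}=1$.
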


\begin{proof}
	Let $S_1$ on a Hilbert space $\KS_1$ be the minimal isometric dilation of $T_1$. By minimality, $\KS_1=\ov{\text{span}}\left\{S_1^mx: x \in \HS, m \in \Z_+\right\}$. By Theorem \ref{thm_001}, there exist contractions $S_2, S_3$ on $\KS_1$ such that $(S_1, S_2)$ is doubly $q_{12}$-commuting and $(S_1, S_3)$ is doubly $q_{13}$-commuting, and that $(T_1^*, T_2^*, T_3^*)=(S_1^*|_{\HS}, S_2^*|_{\HS}, S_3^*|_{\HS})$. Then
	$
	S_2^*S_3^*S_1^mx=\ov{q}_{21}^m\ov{q}_{31}^mS_1^mS_2^*S_3^*x=\ov{q}_{21}^m\ov{q}_{31}^mS_1^mT_2^*T_3^*x=q_{23}\ov{q}_{21}^m\ov{q}_{31}^mS_1^mT_3^*T_2^*x=q_{23}S_3^*S_2^*S_1^mx
	$
for all $m \in \Z_+$ and $x \in \HS$. By continuity arguments, $S_2S_3=q_{23}S_3S_2$ on $\KS_1$. Since $(S_1, S_2)$ is a doubly $q_{12}$-commuting pair of contractions on $\KS_1$, we have by Theorem \ref{thm_104} that it admits a minimal $*$-regular $q_{12}$-isometric dilation $(U_1, U_2)$ on a Hilbert space $\KS_2$, which is doubly $q_{12}$-commuting. By minimality, $(U_1^*|_{\HS}, U_2^*|_{\HS})=(T_1^*, T_2^*)$. Let us define 
\[
A:=S_3, \quad (T_1', T_2'):=(q_{31}S_1, q_{32}S_2) \ \text{on} \ \KS_1 \quad \text{and} \quad (U_1', U_2'):=(q_{31}U_1, q_{32}U_2) \ \text{on} \ \KS_2.
\]
Evidently, $(T_1', T_2')$ is a $q_{12}$-commuting pair of contractions and $(U_1', U_2')$ is a doubly $q_{12}$-commuting pair of isometries, which is a minimal $*$-regular $q_{12}$-isometric dilation of $(T_1', T_2')$. Note that $A$ is a contraction that satisfies $AS_1=T_1'A, AS_2=T_2'A$ and $AS_1^*=T_1'^*A$. By Theorem \ref{thm_intertw}, there exists a contraction $U_3: \KS_2 \to \KS_2$ such that 
$U_3U_1=U_1'U_3, U_3U_2=U_2'U_3, U_3U_1^*=U_1'^*U_3$ and $U_3^*|_{\KS_1}=S_3^*$. Consequently, $U_3U_1=q_{31}U_1U_3, \ U_3U_2=q_{32}U_2U_3, U_3U_1^*=\ov{q}_{31}U_1^*U_3$ and $U_3^*|_{\HS}=S_3^*|_{\HS}=T_3^*$. Hence, the $q$-commuting triple $\underline{U}$ satisfies the hypothesis of Lemma \ref{lem_205} and so, $(U_1, U_2, U_3)$ admits a minimal $q$-isometric dilation. Then $(U_1^*, U_2^*, U_3^*)$ has a $q$-unitary dilation and so, $(T_1^*, T_2^*, T_3^*)=(U_1^*|_{\HS}, U_2^*|_{\HS}, U_3^*|_{\HS})$ admits a $q$-unitary dilation. Thus, $(T_1, T_2, T_3)$ has a $q$-isometric dilation. 
\end{proof}

Now, we present a dual version of the above theorem in terms of regular $q$-isometric dilation.

\begin{cor}
	Let $(T_1, T_2, T_3)$ be a $q$-commuting triple of contractions with $\|q\|=1$ on a Hilbert space $\HS$. If the pairs $(T_1, T_2)$ and $(T_1, T_3)$ admit minimal regular $q_{12}$-isometric and $q_{13}$-isometric dilations respectively, then $(T_1, T_2, T_3)$ has a $q$-isometric dilation.
\end{cor}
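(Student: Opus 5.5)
The plan is to reduce to Theorem \ref{thm_suff} by duality: pass to the adjoint triple $(T_1^*, T_2^*, T_3^*)$, apply Theorem \ref{thm_suff} to it, and then transfer the conclusion back to $(T_1,T_2,T_3)$.

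\textbf{Step 1: the adjoint triple is $q$-commuting with unimodular scalars.} If $T_iT_j=q_{ij}T_jT_i$, then taking adjoints gives $T_j^*T_i^*=\ov{q}_{ij}T_i^*T_j^*$. Hence $T_i^*T_j^*=q_{ij}T_j^*T_i^*$, since $|q_{ij}|=1$. So the adjoint triple is $q$-commuting with the same $q$.

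\textbf{Step 2: the pairs of adjoints have minimal $*$-regular dilations.} By hypothesis $(T_1,T_2)$ has a minimal regular $q_{12}$-isometric dilation, so by Theorem \ref{thm_103} it satisfies Brehmer's positivity condition. I would then invoke the duality from \cite{N1} (Lemma 2.9 and Corollary 2.10, as used in the proof of Theorem \ref{thm_001} and in Example \ref{eg_1}). It states that a $q$-commuting pair has a (minimal) regular $q$-unitary dilation if and only if its adjoint pair has a minimal $*$-regular $q$-isometric dilation. The reason is that the $*$-regularity identity for $(T_1^*,T_2^*)$ is exactly the regularity identity for $(T_1,T_2)$ after taking adjoints, with the $q$-phase factors accounted for. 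This gives a minimal $*$-regular $q_{12}$-isometric dilation of $(T_1^*,T_2^*)$, and in the same way one of $(T_1^*,T_3^*)$.

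\textbf{Step 3: apply Theorem \ref{thm_suff} and transfer back.} Theorem \ref{thm_suff} now applies to $(T_1^*,T_2^*,T_3^*)$ and yields a $q$-isometric dilation. Tracing its proof, one in fact obtains a $q$-unitary dilation $(W_1,W_2,W_3)$, by extending the $q$-isometric dilation of Lemma \ref{lem_205} to a $q$-unitary one through Theorem \ref{thm_103}. Taking adjoints, $(W_1^*,W_2^*,W_3^*)$ is a $q$-commuting tuple of unitaries, and
\[
T_1^{m_1}T_2^{m_2}T_3^{m_3}=\bigl(P_\HS W_3^{m_3}W_2^{m_2}W_1^{m_1}|_\HS\bigr)^*
\]
holds up to a unimodular factor. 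That factor comes from reordering the products via $q$-commutativity, and it is identical on both sides. So $(W_1^*,W_2^*,W_3^*)$ is a $q$-unitary dilation of $(T_1,T_2,T_3)$. Restricting to the smallest subspace containing $\HS$ that is invariant under $W_1^*,W_2^*,W_3^*$ gives a $q$-isometric dilation.

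\textbf{Main obstacle.} I expect the difficulty to lie in Step 2 and in the last part of Step 3. First, one must correctly match the phase factors $q_{ij}^{m_i^-m_j^+-m_i^+m_j^-}$ in the $*$-regular identity with the adjoint of the regular identity; this is what \cite{N1}'s Corollary 2.10 provides. Second, one must verify that the product-ordering phases agree on both sides when dualizing the dilation.
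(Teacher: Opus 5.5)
Your proposal is correct and matches the paper's proof: pass to the adjoint triple, use Lemma 2.9 and Corollary 2.10 of \cite{N1} to get minimal $*$-regular dilations of $(T_1^*,T_2^*)$ and $(T_1^*,T_3^*)$, apply Theorem \ref{thm_suff}, and dualize back through a $q$-unitary dilation. The only cosmetic difference is that you obtain the $q$-unitary dilation by tracing the proof of Theorem \ref{thm_suff} back to Lemma \ref{lem_205} and Theorem \ref{thm_103}, whereas the paper cites the general fact from \cite{PalII} that $q$-commuting isometries extend to $q$-commuting unitaries.
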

	
\begin{proof}
	It is easy to see that the pairs $(T_1^*, T_2^*)$ and $(T_1^*, T_3^*)$ admit minimal $*$-regular $q_{12}$-isometric and $q_{13}$-isometric dilations respectively (see Lemma 2.9 and Corollary 2.10 in \cite{N1}). By Theorem \ref{thm_suff}, $(T_1^*, T_2^*, T_3^*)$ has a $q$-isometric dilation. Since a $q$-commuting family of isometries admits a dilation to a $q$-commuting family of unitaries (see \cite{PalII}), the desired conclusion follows.
\end{proof}	

By Theorem \ref{thm_104}, a doubly $q$-commuting triple $(T_1,T_2,T_3)$ admits a $q$-isometric dilation. The following corollary shows that this assumption can be weakened by requiring only the pairs $(T_1,T_2)$ and $(T_1,T_3)$ to be doubly $q_{12}$-commuting and doubly $q_{13}$-commuting respectively. Before stating this, we present an example of such a triple, which is not doubly $q$-commuting. 

\begin{eg}
	Let $q=\{q_{ij} \in \T : 1 \leq i< j \leq 1 \}$ and let $(A, B)$ be a $q_{23}$-commuting pair of contractions on a Hilbert space $\HS$, which is not doubly $q_{23}$-commuting. It is not difficult to see that the pair in Example \ref{eg_1} satisfies this property. Consider the operators 
	\[
	T_1=\begin{pmatrix}
		A & 0 \\
		0 & 0
	\end{pmatrix}, \quad	T_2=\begin{pmatrix}
	0 & 0 \\
	0 & A
	\end{pmatrix} \quad \text{and} \quad	T_3=\begin{pmatrix}
	0 & 0 \\
	0 & B
	\end{pmatrix} \quad \text{on $\HS\oplus \HS$}
	\]
	Evidently, $(T_1, T_2, T_3)$ is a $q$-commuting triple of contractions such that $(T_1, T_2)$ and $(T_1, T_3)$ are doubly $q_{12}$-commuting isometric and doubly $q_{13}$-commuting respectively. Since $(A, B)$ is not doubly $q_{23}$-commuting, it follows that $(T_2, T_3)$ is not doubly $q_{23}$-commuting. \qed
	\end{eg}
 
\begin{cor}
	Let $(T_1, T_2, T_3)$ be a $q$-commuting triple of contractions on a Hilbert space $\HS$. If the pairs $(T_1, T_2)$ and $(T_1, T_3)$ are doubly $q_{12}$-commuting and doubly $q_{13}$-commuting respectively, then $(T_1, T_2, T_3)$ has a $q$-isometric dilation.
\end{cor}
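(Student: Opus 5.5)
The plan is to reduce this corollary to Theorem \ref{thm_suff}. It then suffices to show that each of the pairs $(T_1,T_2)$ and $(T_1,T_3)$ admits a minimal $*$-regular $q_{12}$-isometric, respectively $q_{13}$-isometric, dilation. Once both facts are in hand, Theorem \ref{thm_suff} applies verbatim to the $q$-commuting triple $(T_1,T_2,T_3)$ and yields a $q$-isometric dilation.

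To verify the hypothesis for the pair $(T_1,T_2)$, I would invoke Theorem \ref{thm_104}. Its second assertion says that a $q$-commuting tuple is doubly $q$-commuting if and only if it has a minimal $q$-isometric dilation that is both regular and $*$-regular. Since $(T_1,T_2)$ is by assumption doubly $q_{12}$-commuting, it therefore has a minimal $q_{12}$-isometric dilation which is in particular $*$-regular. The identical argument with $q_{13}$ in place of $q_{12}$ handles the pair $(T_1,T_3)$.

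Alternatively, one could avoid the second part of Theorem \ref{thm_104}. Theorem \ref{thm_103} shows that a doubly $q_{12}$-commuting pair admits a regular $q_{12}$-unitary dilation, and hence a minimal $q_{12}$-isometric dilation. One would then check that this dilation is doubly $q_{12}$-commuting and apply the first part of Theorem \ref{thm_104}. Either route places us in the setting of Theorem \ref{thm_suff}.

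I do not expect a genuine obstacle. The only point needing care is that the scalars attached to the pairs are the entries $q_{12}$ and $q_{13}$ of the same family $q$ governing the triple, which is exactly how Theorem \ref{thm_suff} is phrased. No assumption on the pair $(T_2,T_3)$ beyond $q_{23}$-commutativity is used, consistent with the preceding example.
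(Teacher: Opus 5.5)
Your proposal is correct and is the paper's proof: Theorem \ref{thm_104} gives the doubly $q_{12}$- and $q_{13}$-commuting pairs minimal $*$-regular $q_{12}$- and $q_{13}$-isometric dilations, and Theorem \ref{thm_suff} then gives the triple a $q$-isometric dilation. Your alternative via Theorem \ref{thm_103} is not needed, and as written it is incomplete, because you do not actually check that the dilation it produces is doubly $q_{12}$-commuting.
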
 

\begin{proof}
	By Theorem \ref{thm_104}, the pairs $(T_1, T_2)$ and $(T_1, T_3)$ have minimal $*$-regular $q_{12}$-isometric and $q_{13}$-isometric dilations respectively. The desired conclusion now follows from Theorem \ref{thm_suff}.
\end{proof}

	\subsection*{Acknowledgements}
During the course of this work, the author was supported via the J. C. Bose fellowship of Prof. Debashish Goswami, Indian Statistical Institute, Kolkata. The author greatly appreciates the warm and generous hospitality provided by the Indian Statistical Institute, Kolkata, India, and expresses sincere thanks to Prof. Debashish Goswami for his kind support.

\end{document}